\newtheorem{lemma}{Lemma}[section]
\newtheorem{proposition}{Proposition}[section]
\newtheorem*{AFT}{Analytic Fredholm Theorem}
\theoremstyle{definition}
\newtheorem{remark}{Remark}[section]
\newtheorem{assumption}{Assumption}[section]
\DeclareMathOperator{\ran}{ran}
\DeclareMathOperator{\id}{id}
\DeclareMathOperator{\FF}{\mathcal{F}}
\newcommand{\NZ}[1]{N_{#1}}
\newcommand{\NO}[1]{N_{#1}}
\newcommand{\AZ}[1]{A_{#1}}
\newcommand{\PZ}[1]{\mathrm{P}_{\mathrm{{R}}^c_0}}
\newcommand{\PZZ}[1]{P_{(\ran \AZ{0}+\AZ{1}\ker\AZ{0})^c}}
\newcommand{\NC}[1]{N_{#1}}
\newcommand{\SZ}[1]{S_{\mathrm{{R}}^c_0}}
\newcommand{\SZZ}[1]{S_{\mathrm{{R}}^c_1}}
\newcommand{\RR}{\mathrm{{R}}_0}
\newcommand{\RRR}{\mathrm{{R}}_1}
\newcommand{\KK}{\mathrm{{K}}_0}
\newcommand{\KKK}{\mathrm{{K}}_1}
\newcommand{\RC}{\mathrm{{R}}^c_0}
\newcommand{\RCC}{\mathrm{{R}}^c_1}
\newcommand{\KC}{\mathrm{{K}}^c_0}
\newcommand{\KCC}{\mathrm{{K}}^c_1}
\newcommand{\PR}{\mathrm{P}_{\RC}}
\newcommand{\PK}{\mathrm{P}_{\KC}}
\newcommand{\PRR}{\mathrm{P}_{\RCC}}
\newcommand{\PKK}{\mathrm{P}_{\KCC}}
\newcommand{\PP}{\mathrm{P}}
\newcommand{\QL}{Q^L_{\{\RC,\KC\}}}
\newcommand{\QR}{Q^R_{\{\RC,\KC\}}}
\newcommand{\SG}{(\SZ{r_1})^g_{\{\RCC,\KCC\}}}
\newcommand{\SSG}{(S_{V_0})^g_{\{V_1,W_1\}}}
\newcommand{\indj}{\mathbbm{1}_{j=0}}
\newcommand{\GZ}{G_{j}}
\numberwithin{equation}{section}
\def \expandafter \normalsize \expandafter{\normalsize \setlength \abovedisplayskip{5pt plus 2pt minus 3pt}}
\def \expandafter \normalsize \expandafter{\normalsize \setlength \abovedisplayshortskip{0pt plus 2pt}}
\def \expandafter \normalsize \expandafter{\normalsize \setlength \belowdisplayskip{5pt plus 2pt minus 3pt}}
\def \expandafter \normalsize \expandafter{\normalsize \setlength \belowdisplayshortskip{2pt plus 2pt}}
\begin{document}
	
	\newcommand{\gpi}{\textrm{\greektext p}}	
	
	\title{Fredholm inversion around a singularity: application to autoregressive time series in Banach space}
	\date{}
	\author{Won-Ki Seo\footnote{This paper is based on a chapter of the author's Ph.D.\ dissertation at the University of California, San Diego. Author's address : University of Sydney, NSW 2006, Australia. Email: {won-ki.seo@sydney.edu.au}} }
	\affil{University of Sydney}
	
	\maketitle
	
	\begin{abstract}	
		This paper consider inverting a holomorphic Fredholm operator pencil. Specifically, we provide necessary and sufficient conditions for the inverse of a holomorphic Fredholm operator pencil to have a simple pole and a second order pole. Based on those results, a closed-form expression of the Laurent expansion of the inverse around an isolated singularity is obtained in each case. As an application of the results, we obtain a suitable extension of the Granger-Johansen representation theory for random sequences taking values in a separable Banach space. Due to our closed-form expression of the inverse, we may fully characterize the solutions to a given autoregressive law of motion except a term that depends on initial values.
	\end{abstract}
\bigskip \medskip
	\noindent 	 \textbf{MSC 2020}: 47A10, 91B84.  \vspace{1em}\\ 
	 \noindent \textbf{Keywords}: Operator pencils; analytic Fredholm theorem; linear time-invariant dynamical systems,   Granger-Johansen representation theorem. 		 \vspace{1em}

	\pagebreak
	\section{Introduction}
	The so-called Granger-Johansen representation theory is the results on the existence and representation of the solutions to a given autoregressive law of motion. Due to contributions by \cite{engle1987}, \cite{Johansen1991,Johansen1996}, \cite{Schumacher1991}, and \cite{Faliva2010}, we already have well developed representation theory in finite dimensional Euclidean space. It worths mentioning that, in the latter two papers, the Granger-Johansen theory is obtained in the framework of analytic function theory;  \cite{Schumacher1991} obtains a necessary and sufficient condition for a matrix-valued function of a single complex variable (matrix pencil), which characterizes an autoregressive law of motion, to have a simple pole at one, and shows that this leads to nonstationary I(1) solutions, which become stationary by first-order differencing.  The monograph of \cite{Faliva2010} provides a systematic reworking and extension of \cite{Schumacher1991}, and contains a representation theorem associated with nonstationary I(2) solutions, which become stationary by second-order differencing; see also  \cite{franchi2016} and  \cite{Franchi2017a} for more general results on this topic. 
	More recently, the Granger-Johansen representation theory was extended to infinite dimensional function spaces (see e.g., see \citealp{BSS2017}) and, as in  \cite{Schumacher1991}, it turns out that the representation theory can be obtained by inverting the operator pencil which characterizes the autoregressive law of motion at an isolated singularity; see e.g., \cite{Seo2017}, \cite{BS2018}, \cite{Franchi2017b} and  \cite{seo_2022}. Of course, this is certainly not the only example where inversion of operator pencil can be useful in applied fields. 
	
	
	
	In this paper, we consider inverting holomorphic Fredholm operator pencils around an isolated singularity. Specifically, we first obtain necessary and sufficient conditions for the inverse of a holomorphic Fredholm pencil to have a simple pole and a second order pole. We then obtain a closed-form expression of the inverse by deriving  a recursive formula to determine all the coefficients in the Laurent expansion of the inverse around an isolated singularity. We apply our theoretical results to obtain a suitable version of the Granger-Johansen representation theorem. Due to our closed-form expression, I(1) and I(2) solutions to a given autoregressive law of motion can be fully characterized up to a component depending on initial values even in a separable Banach space setting, which has not been obtained in the literature to the best of the author's knowledge.

	The remainder of the paper is organized as follows. In Section \ref{sprelim}, we review some essential mathematics. In Section \ref{sholomorphic}, we study in detail on inversion of a holomorphic Fredholm pencil based on the analytic Fredholm theorem; our main results are obtained in this section.
	Section \ref{srep} contains a suitable extension of the Granger-Johansen representation theory as an application of our inversion theorems. Conclusion follows in Section \ref{sconclude}.  	
	
	\section{Preliminaries}\label{sprelim}
	\subsection{Review of Banach spaces}
	Let $\mathcal B$ be a separable Banach space over the complex plane $\mathbb{C}$ with norm \(\|\cdot\|\). Moreover let \(\mathcal L_{\mathcal B}\) denote the Banach space of bounded linear operators on $\mathcal B$ with the usual operator norm \(\Vert A\Vert_{\mathcal L_{\mathcal B}}=\sup_{\Vert x\Vert\leq1}\Vert A x\Vert\). Let \(\id_{\mathcal B}\in\mathcal L_{\mathcal B}\) denote the identity map on \(\mathcal B\). Given a subspace \(V\subset \mathcal B\), let \(A{\mid_{V}}\) denote the restriction of an operator \(A\in\mathcal L_{\mathcal B}\) to \(V\). 
	Given $A \in \mathcal L_{\mathcal B}$, we define two important subspaces of $\mathcal B$ as follows:
	\begin{align*}
		\ker A&=\{x\in \mathcal B \,\mid\, A x=0\},\\
		\ran A&=\{A x \,\mid\, x\in \mathcal B\}.
	\end{align*}	
	Let \(V_1,V_2,\ldots,V_k\) be subspaces of \(\mathcal B\). The algebraic sum of $V_1,V_2,\ldots,V_k$ is defined by
	\begin{align*}
		\sum_{j=1}^k V_j = \{v_1+v_2+\ldots,v_k : v_j \in V_j \text{ for each } j  \}.
	\end{align*}  
	We say that \(\mathcal B\) is the (internal) direct sum of \(V_1,V_2,\ldots,V_k\), and write \(\mathcal B = \oplus_{j=1}^k V_j\), if \(V_1,V_2,\ldots,V_k\) are closed subspaces satisfying \(V_j \cap \sum_{j' \neq j} V_{j'} = \{0\}\) and \(\sum_{j=1}^k V_j = \mathcal B\). For any \(V\subset \mathcal B\), we let $V^c \subset \mathcal B$ denote a subspace (if exists) such that \(\mathcal B = V \oplus V^c\). Such a subspace $V^c$ is called a complementary subspace of \(V\). It turns out that a subspace \(V\) allows complementary subspace \(V^c\) if and only if there exists the unique bounded projection onto \(V^c\) along \(V\) \citep[Theorem 3.2.11]{megginson1998}. In general, a complementary subspace is not uniquely determined.

	Given $V \subset \mathcal B$. The cosets of $V$ are the collection of the following sets
	\begin{align*}
		x + V = \{x + v : v \in V \}, \quad x \in \mathcal B.
	\end{align*}
	The quotient space $\mathcal B/V$ is the vector space whose elements are  equivalence classes of the cosets of $V$, with the equivalence relation $\simeq$ is given by
	\begin{align*}
		x + V \simeq y + V \quad \Leftrightarrow \quad x-y \in V.
	\end{align*}
	When $V = \ran A$ for some $A \in \mathcal L_{\mathcal B}$, the dimension of $\mathcal B/V$ is called the defect of $A$. 
	
	\subsection{Fredholm operators}
	An operator $A \in \mathcal L_{\mathcal B}$ is said to be a Fredholm operator if $\ker A$ and $\mathcal B/\ran A$ are finite dimensional.
	The index of a Fredholm operator $A$ is the integer given by $\dim(\ker A)-\dim(\mathcal B/\ran A)$.  It turns out that a bounded linear operator with finite defect has a closed range \citep[Lemma 4.38]{abramovich2002}. Therefore $\ran A$ is closed if \(A\) is a Fredholm operator. Fredholm operators are invariant under compact perturbation; if $A$ is a Fredholm operator and \(K\) is a compact operator, $A+K$ is a Fredholm of the same index.	In this paper, we mainly consider Fredholm operators of index zero, so we let $\FF_0 (\subset \mathcal L_{\mathcal B})$ denote the collection of such operators.
	
	\subsection{Generalized inverse operators}\label{sginverse}
	Let \(\mathcal B_1\) and \(\mathcal B_2\) be Banach spaces and \(\mathcal L_{\mathcal B_1,\mathcal B_2}\) denote the space of bounded linear operators from \(\mathcal B_1\) to \(\mathcal B_2\).  
	In the subsequent discussion, we need the notion of a generalized inverse operator of $A \in \mathcal L_{\mathcal B_1,\mathcal B_2}$. Suppose that \(\mathcal B_1 = \ker A \oplus (\ker A)^c\) and \(\mathcal B_2 = \ran A \oplus (\ran A)^c\).
	Given the direct sum conditions, the generalized inverse of $A$, denoted by $A^g$, is defined as the unique linear extension of $(A{\mid_{ (\ker A)^c}})^{-1}$(defined on \(\ran A\)) to $\mathcal B$.  Specifically, \(A^g\) is given by
	\begin{align} \label{ginverse}
		A^g = (A{\mid_{ (\ker A)^c}})^{-1} (\id_{\mathcal B}-\PP_{(\ran A)^c}),
	\end{align}
	where \(\PP_{V^c}\) denotes the bounded projection onto \(V^c\) along \(V\).
	It can be shown that the generalized inverse \(A^g\) has the following properties:
	\begin{align*}
		&AA^gA = A, \quad A^gAA^g = A^g, \quad A A^g = (\id_{\mathcal B}-\PP_{(\ran A)^c}), \quad A^g A = \PP_{(\ker A)^c}.
	\end{align*}
	Since complementary subspaces are not uniquely determined, \(A^g\) depends on our choice of them. 
	
	\subsection{Operator pencils}
	Let $U$ be an open connected subset of $\mathbb{C}$. A map $A : U \rightarrow \mathcal L_{\mathcal B}$ is called an operator pencil. An operator pencil $A$ is holomorphic at $z_0 \in U$ if the limit 
	\begin{align*}
		A^{(1)}(z_0) := \lim_{z \rightarrow z_0} \frac{A(z) - A(z_0)}{z-z_0}
	\end{align*}  
	exists in the uniform operator topology. If $A$ is holomorphic for all $z \in D \subset U$ for an open connected set $D$, then we say that $A$ is holomorphic on $D$. A holomorphic operator pencil $A$ on $D$ allows the Taylor series for all $z_0 \in D$.

	An operator pencil $A$ is said to be meromorphic on $U$  if there exists a discrete set $U_0 \subset U$ such that $A : U \setminus U_0 \rightarrow \mathcal L_{\mathcal B}$ is holomorphic and the following Laurent expansion is allowed in a punctured neighborhood of $z_0 \in U_0$:
	\begin{align*}
		A(z) = \sum_{j=-m}^{-1} \AZ{j} (z-z_0)^j + \sum_{j=0}^\infty \AZ{j} (z-z_0)^j,
	\end{align*}
	where the first term is called the principal part, and the second term is called the holomorphic part of the Laurent series.  A finite positive integer $m$ is called the order of pole at \(z_0\). 	When \(m=1\) (resp.\ $m=2$), we simply say that \(A(z)\) has a simple pole (resp.\ second order pole) at \(z_0\).  
	If $\AZ{-m}, \ldots, \AZ{-1}$ are finite rank operators, we say that $A(z)$ is finitely meromorphic at $z_0$. In addition, $A(z)$ is said to be finitely meromorphic on $U$ if it is finitely meromorphic at each of its poles. 	
	
	The set of complex numbers \(z\in U\) at which the operator \(A(z)\) is noninvertible is called the spectrum of \(A\), and denoted by \(\sigma(A)\). It turns out that the spectrum is always a closed set \citep[p.\ 56]{Markus2012}.
	
	If \(A(z)\) is a Fredholm operator of index zero for \(z \in U\), we hereafter simply call it an \(\FF_0\)-pencil. 
	
	\subsection{Fredholm Theorem}\label{sfredholm}
	We provides a crucial input, called the analytic Fredholm theorem, for the subsequent discussion. 
	\begin{AFT}{(Corollary 8.4 in \citealp{Gohberg2013})} Let $A : U \to \mathcal L_{\mathcal B}$ be a holomorphic Fredholm operator pencil, and assume that \(A(z)\) is invertible for some element \(z \in U\). Then 
		\begin{itemize}
			\item[$\mathrm{(i)}$] $\sigma(A)$ is a discrete set. 
			\item[$\mathrm{(ii)}$] In a punctured neighborhood of \(z_0 \in \sigma(A)\), 
			\[A(z)^{-1} = \sum_{j=-m}^{\infty}\AZ{j}(z-z_0)^j, \]
			where \(A_0\) is a Fredholm operator of index zero and \(\AZ{-m},\ldots,\AZ{-1}\) are finite rank operators.
		\end{itemize}
	\end{AFT}
	\noindent That is, the analytic Fredholm theorem implies that if the inverse of a holomorphic Fredholm pencil exists, it is finitely meromorphic.

	\subsection{Random elements of Banach space}
	We briefly introduce Banach-valued random variables, called \(\mathcal B\)-random variables. More detailed discussion on this subject can be found in e.g.,  \citet[Chapter 1]{Bosq2000}. We let \(\mathcal B'\) denote the topological dual of \(\mathcal B\). 
	
	Let $(\Omega, \mathbb{F}, \mathbb{P})$ be an underlying probability triple. A $\mathcal B$-random variable is defined as a measurable map $X: \Omega \to \mathcal B$, where $\mathcal B$ is understood to be equipped with its Borel $\sigma-$field. $X$ is said to be integrable if $E\|X\| < \infty$. If $X$ is integrable, there exists a unique element $EX \in \mathcal B$ such that for all $f \in \mathcal B'$, 
	\begin{align*}
		E[f(X)] = f (EX). 
	\end{align*}
	Let $L^2_\mathcal{B}$ denote the space of $\mathcal B$-random variables $X$ such that $EX = 0$ and $E\|X\|^2 < \infty$. 

\subsection{I(1) and I(2) sequences in Banach space}
Let  $\varepsilon = (\varepsilon_t, t\in \mathbb{Z})$ be an independent and identically distributed  sequence in $L^2_{\mathcal B}$ such that $E\varepsilon_t = 0$ and $0 < E\|\varepsilon_t\|^2 < \infty$. In this paper, \(\varepsilon\) is simply called a strong white noise.

For some \(t_0 \in \mathbb{Z} \cup \{-\infty\}\), let $X=(X_t, t \geq t_0)$ be a stochastic process taking values in $\mathcal B$ satisfying 
\begin{align*}
	X_t = \sum_{j=0}^\infty A_j \varepsilon_{t-j},
\end{align*}
where $(A_j, j\geq 0)$ is a sequence in $\mathcal L_{\mathcal B}$ satisfying $\sum_{j=0}^\infty \|A_j\|_{\mathcal L_{\mathcal B}} < \infty$. We call the sequence $(X_t, t \geq t_0)$ a standard linear process. In this case \(\sum_{j=0}^\infty A_j\) is convergent in $\mathcal L_{\mathcal B}$. 

We say a sequence in $L^2_{\mathcal B}$ is I(0) if it is a standard linear process with \(\sum_{j=0}^\infty A_j \neq 0\).	For \(d \in \{1,2\}\), let \(X = (X_t, t \geq -d+1)\) be a sequence in \(L^2_{\mathcal B}\). We say $(X_t, t \geq 0)$ is I($d$) if its $d$-th differences $\Delta^d X = (\Delta^d X_t, t \geq 1)$ is I(0).

\section{Inversion of a holomorphic $\FF_0$-pencil around an isolated singularity} \label{sholomorphic}
Throughout this section, we employ the following assumption.
\begin{assumption}\label{assumelaurent}
	\(A : U \to \mathcal L_{\mathcal B}\) be a holomorphic Fredholm pencil and  \(z_0 \in \sigma(A)\) is an isolated element.  
\end{assumption}
\noindent Since \(A(z)\) is holomorphic, it allows the Taylor series around \(z_0\) as follows: 	
\begin{align}
	A(z) = \sum_{j=0}^\infty \AZ{j} (z-z_0)^j, \label{powereq}
\end{align}  
where $\AZ{0} = A(z_0)$, $\AZ{j} = {A^{(j)}(z_0)}{/ j !}$ for \(j \geq 1\), and $A^{(j)}(z)$ denotes the $j$-th complex derivative of \(A(z)\). Furthermore, we know from the analytic Fredholm theorem that \( N(z) \coloneqq A(z)^{-1}\) allows the Laurent series expansion in a punctured neighborhood of \(z_0\) as follows:
\begin{align} \label{laurent1}
	N(z) = \sum_{j=-m}^{-1} \NZ{j} (z-z_0)^{j} + \sum_{j=0}^{\infty} \NZ{j} (z-z_0)^{j}, \quad 1 \leq m < \infty. 
\end{align}
Our first goal is to find necessary and sufficient conditions for \(m=1\) and \(2\). We then provide a recursive formula to obtain \(\NZ{j}\) for \(j\geq -m\). Before stating our main assumptions and results of this section, we provide some preliminary results.

First, it can be shown that any Fredholm operator pencil satisfying Assumption \ref{assumelaurent} is in fact an $\FF_0$-pencil. 
\begin{lemma}\label{lem0}
	Under Assumption \ref{assumelaurent}, \(A:U \rightarrow \mathcal L_{\mathcal B}\) is an \(\FF_0\)-pencil.
\end{lemma}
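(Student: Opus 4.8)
The plan is to combine two facts: that the Fredholm index is locally constant on the space of Fredholm operators, and that the isolation of $z_0$ in $\sigma(A)$ forces $A(z)$ to be \emph{invertible}—hence of index zero—at nearby points. Since $U$ is connected, constancy of the index then propagates the value $0$ to all of $U$.

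First I would exploit the isolation hypothesis. By definition, $z_0$ being an isolated element of $\sigma(A)$ means there is an open neighborhood $W \subset U$ of $z_0$ with $W \cap \sigma(A) = \{z_0\}$; shrinking $W$ to an open ball, the punctured set $W \setminus \{z_0\}$ is nonempty and disjoint from $\sigma(A)$. Picking any point $z_1 \in W \setminus \{z_0\}$, the operator $A(z_1)$ is invertible, so $\ker A(z_1) = \{0\}$ and $\mathcal B / \ran A(z_1) = \{0\}$, whence $\ind A(z_1) = \dim(\ker A(z_1)) - \dim(\mathcal B/\ran A(z_1)) = 0$. This supplies a single point of $U$ at which the index is known to vanish.

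Next I would argue that $z \mapsto \ind A(z)$ is constant on $U$. The pencil $A : U \to \mathcal L_{\mathcal B}$ is holomorphic, hence continuous in the uniform operator topology, and by Assumption \ref{assumelaurent} every $A(z)$ is Fredholm. Because the Fredholm index is locally constant on the open subset of Fredholm operators in $\mathcal L_{\mathcal B}$, the composition $z \mapsto \ind A(z)$ is locally constant on $U$; as $U$ is connected, it is in fact globally constant. Combining this with the previous paragraph, that constant value is $0$, so $A(z) \in \FF_0$ for every $z \in U$, which is the claim.

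The one nontrivial input is the local constancy of the index. The excerpt records only the invariance of the index under \emph{compact} perturbation, whereas here I need the complementary (and equally standard) stability under perturbations of \emph{small operator norm}: writing $A(z) = A(z_*) + \bigl(A(z) - A(z_*)\bigr)$ and using continuity, the difference $A(z) - A(z_*)$ has arbitrarily small norm for $z$ near $z_*$, and a small-norm perturbation of a Fredholm operator is Fredholm of the same index. I expect this to be the main step to pin down precisely, and I would support it with a citation to the standard stability theorem for the Fredholm index (e.g.\ in \citealp{abramovich2002}) rather than reprove it, since it is not among the facts already recorded in the preliminaries.
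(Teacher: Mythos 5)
Your proof is correct and follows essentially the same route as the paper's: the isolation of $z_0$ in $\sigma(A)$ yields a nearby point where $A(z)$ is invertible (hence of index zero), and constancy of the Fredholm index over the connected set $U$ then propagates index zero to all of $U$. The only difference is one of justification: the paper simply cites \citet[Section 2]{kaballo2012} for the fact that the index is independent of $z \in U$, whereas you rederive it from norm-continuity of the holomorphic pencil together with the standard small-norm stability of the Fredholm index, which is a perfectly adequate substitute.
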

\begin{proof}
	Since \(z_0\) is an isolated element, it implies that there exists some point in \(U\) where the operator pencil is invertible. It turns out that the index of \(A(z)\) does not depend on \(z \in U\) given that \(U\) is connected, and Fredholm operators of nonzero index are not invertible \citep[Section 2]{kaballo2012}. Therefore, this implies that \(A(z)\) has index zero for \(z \in U\).
\end{proof}
\noindent In view of Lemma \ref{lem0}, it may be deduced that the analytic Fredholm theorem provided in Section \ref{sfredholm} is in fact only for $\FF_0$-pencils.    

The following is an important observation implied by Assumption \ref{assumelaurent}.
\begin{lemma}\label{lem1}
	Under Assumption \ref{assumelaurent}, 
	\begin{itemize}
		\item[$\mathrm{(i)}$] \(\ran A(z)\) allows a complementary subspace for \(z \in U\).
		\item[$\mathrm{(ii)}$] \(\ker  A(z)\) allows a complementary subspace for \(z \in U\).
		\item[$\mathrm{(iii)}$] For any finite dimensional subspace \(V\), \(\ran  A(z) + V\) allows a complementary subspace for \(z \in U\)
	\end{itemize}
\end{lemma}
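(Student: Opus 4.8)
The plan is to exploit the fact, established in Lemma \ref{lem0}, that $A(z)$ is an $\FF_0$-pencil, so that for every $z \in U$ the operator $A(z)$ is Fredholm of index zero. This means both $\ker A(z)$ and $\mathcal B / \ran A(z)$ are finite dimensional, with equal dimension, and that $\ran A(z)$ is closed. The strategy throughout is to produce, in each of the three cases, an explicit closed complementary subspace; the key recurring tool is the general principle that a \emph{finite dimensional} subspace of a Banach space always admits a closed complementary subspace, and dually that a \emph{closed finite-codimensional} subspace always admits a (necessarily finite dimensional, hence closed) complement. I will phrase these two facts precisely at the start, citing the projection characterization recorded after the direct-sum definition in Section \ref{sprelim} (namely \citealp[Theorem 3.2.11]{megginson1998}).

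For part (ii), since $\ker A(z)$ is finite dimensional, I would invoke the standard result that every finite dimensional subspace of a Banach space is closed and complemented: one builds a bounded projection onto $\ker A(z)$ using a basis $e_1, \ldots, e_k$ together with Hahn--Banach functionals $f_1, \ldots, f_k \in \mathcal B'$ satisfying $f_i(e_j) = \delta_{ij}$, and sets $\mathrm{P} = \sum_{i=1}^k f_i(\cdot)\, e_i$. The complement is then $\ker \mathrm{P}$, which is closed. For part (i), I would use the dual observation: $\ran A(z)$ is closed with finite codimension $\dim(\mathcal B / \ran A(z)) = \dim(\ker A(z)) < \infty$. A closed subspace of finite codimension is complemented, with any algebraic complement automatically finite dimensional and hence closed; concretely, pick $x_1, \ldots, x_p \in \mathcal B$ whose images form a basis of the quotient $\mathcal B / \ran A(z)$ and take $V^c = \spn\{x_1, \ldots, x_p\}$, verifying $\mathcal B = \ran A(z) \oplus V^c$ directly from the definition of the quotient.

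Part (iii) is the one requiring a little more care, and is the step I expect to be the main obstacle, though it is mild. The issue is that adding an arbitrary finite dimensional $V$ to the closed subspace $\ran A(z)$ need not obviously yield a closed subspace, so one cannot immediately apply the argument from part (i). The plan is to first argue that $\ran A(z) + V$ remains closed: since $\ran A(z)$ is closed with finite codimension and $V$ is finite dimensional, the sum $\ran A(z) + V$ is the sum of a closed subspace and a finite dimensional subspace, which is always closed (a finite dimensional perturbation of a closed subspace is closed). Having established closedness, I would then observe that $\ran A(z) + V$ still has finite codimension, because $\dim\bigl(\mathcal B /(\ran A(z) + V)\bigr) \le \dim(\mathcal B / \ran A(z)) < \infty$. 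At that point part (iii) reduces to exactly the situation of part (i): a closed subspace of finite codimension is complemented by the same finite dimensional basis-of-the-quotient construction.

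In summary, parts (i) and (iii) rest on ``closed $+$ finite codimension $\Rightarrow$ complemented,'' while part (ii) rests on ``finite dimensional $\Rightarrow$ complemented,'' and the only genuine work in (iii) is verifying that $\ran A(z) + V$ is closed so that the finite-codimension argument applies. All three conclusions follow uniformly for each $z \in U$ because the Fredholm index-zero property, and hence the relevant finiteness of $\dim \ker A(z)$ and $\dim(\mathcal B/\ran A(z))$, holds pointwise on $U$ by Lemma \ref{lem0}.
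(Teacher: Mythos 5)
Your proposal is correct and follows essentially the same route as the paper: part (ii) from ``finite dimensional $\Rightarrow$ complemented,'' parts (i) and (iii) from ``closed with finite codimension $\Rightarrow$ complemented,'' with (iii) reduced to (i) by observing that the sum of a closed subspace and a finite dimensional subspace is closed and of finite codimension. The only difference is cosmetic: you inline the standard constructions (the Hahn--Banach projection onto a finite dimensional subspace, the span of coset representatives of the quotient) where the paper simply cites Theorem 3.2.18 of Megginson.
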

\begin{proof}
We first prove (i).  Since \( A(z)\) is a Fredholm operator, we know that $\ran A(z)$ is closed and \(\mathcal B / \ran  A(z)\) is finite dimensional. Given any closed subspace \(V\), it turns out that $V$ allows a complementary subspace if \(\mathcal B/V\) is finite dimensional  \citep[Theorem 3.2.18]{megginson1998}. Thus, (i) is proved. Our proof of (ii) follows from that every finite dimensional subspace allows a complementary subspace \citep[Theorem 3.2.18]{megginson1998}.  (iii) follows from that the algebraic sum \(\ran  A(z) + V \) is a closed subspace and \(\mathcal B/(\ran  A(z) + V) \)  is finite dimensional  since \(\ran  A(z)\) is closed and \(V\) is finite dimensional.   
\end{proof}
\noindent	  In a Hilbert space, a closed subspace allows a complementary subspace, which can always be chosen as the orthogonal complement. We therefore know that \(\ran A(z)\) and \(\ran A(z) + V\) allow complementary subspaces in a Hilbert space if \(\ran A(z)\) is closed. However in a Banach space, closedness of a subspace does not guarantee the existence of a complementary subspace. The reader is referred to \citet[pp.\ 301-302]{megginson1998} for a detailed discussion on this subject.

\subsection{Simple poles of holomorphic $\FF_0$ inverses}  \label{npre}
Due to Lemma \ref{lem1}, we know that \(\ran \AZ{0}\) and \(\ker \AZ{0}\) are complemented, meaning that we may find their complementary subspaces, as well as the associated bounded projections. Depending on our choice of complementary subspaces, we may also define the corresponding generalized inverse of \(\AZ{0}\) as in \eqref{ginverse}. To simplify expressions, we let 
\begin{align}
	&\indj = \begin{cases}  \id_{\mathcal B} \,\, &\text{ if $j=0$}, \\
		0 \,\, &\text{ otherwise,} \end{cases} \quad\,\,  \notag \\
	&\GZ(\ell,m) = \sum_{k=-m}^{j-1} \NZ{k}\AZ{j+\ell-k}, \quad \ell = 0,1,2,\ldots, \notag \\ 
	&\RR = \ran \AZ{0}, \notag\\
	&\KK = \ker \AZ{0},\notag\\
	&\KKK = \left\{ x \in \KK : \AZ{1}x \in \RR \right\},  \notag\\ 
	&\RC= \text{a complementary subspace of $\ran \AZ{0}$}, \notag	\\
	&\KC= \text{a complementary subspace of $\ker\AZ{0}$},\notag  \\
	&\PR=  \text{the bounded projection onto  $\RC$ along $\RR$}, \notag \\
	&\PK=  \text{the bounded projection onto  $\KC$ along $\KK$}, \notag \\
	&\SZ{1} = \PR \AZ{1}{\mid_{\KK}} : \KK \to \RC,  \notag\\
	&(A_0)^g_{\{\RC,\KC\}}  = \text{the generalized inverse of \(\AZ{0}\)}, \notag
\end{align}
where $\RC$ and $\KC$ depend on our choice. We thus need to be careful with that \(\PR,\PK\) and \(\SZ{1}\) could be differently defined depending on our choice of complementary subspaces; however, given specific choices of \(\RC\) and \(\KC\), they are uniquely defined. Similarly, the subscript \(\{\RC,\KC\}\) of a generalized inverse underscores its dependence on our choice of \(\RC\) and \(\KC\).  

We provide another useful lemma.
\begin{lemma}\label{lem4}
	Suppose that Assumption \ref{assumelaurent} is satisfied. Then invertibility (or noninvertibility) of $\SZ{1}$ does not depend on the choice of $\RC$.
\end{lemma}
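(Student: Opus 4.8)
The plan is to factor $\SZ{1}$ through a single object that does not reference the complement $\RC$ at all, namely the map that $\AZ{1}$ induces into the quotient $\mathcal B/\RR$. By Lemma \ref{lem0} the pencil $A$ is an $\FF_0$-pencil, so $\KK=\ker\AZ{0}$ and $\mathcal B/\RR$ are finite dimensional of the same dimension; consequently any linear map from $\KK$ into a space of that same finite dimension is invertible precisely when it is bijective, and bijective linear maps between finite-dimensional spaces automatically have bounded inverses. This reduces the statement to an equivalence of bijectivity that I can track through the quotient.

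First I would introduce the canonical quotient map $q:\mathcal B\to\mathcal B/\RR$, $q(x)=x+\RR$, and the induced map $\bar S:\KK\to\mathcal B/\RR$ given by $\bar S(x)=\AZ{1}x+\RR$. This $\bar S$ is manifestly independent of any choice of complement of $\RR$. Next, for a fixed complement $\RC$ with associated projection $\PR$, I would observe that $q$ restricts to an isomorphism $q|_{\RC}:\RC\to\mathcal B/\RR$: it is injective because $\RC\cap\RR=\{0\}$, surjective because $\RC+\RR=\mathcal B$, and its inverse is bounded since both spaces are finite dimensional. This isomorphism statement is the only place where the $\FF_0$ property (finite defect, matching dimensions) is essential, and it is the crux of the argument.

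The key computation is then the factorization $\SZ{1}=(q|_{\RC})^{-1}\circ\bar S$. For $x\in\KK$ we have $\AZ{1}x-\PR\AZ{1}x=(\id_{\mathcal B}-\PR)\AZ{1}x\in\RR=\ker q$, so $q(\SZ{1}x)=q(\PR\AZ{1}x)=q(\AZ{1}x)=\bar S(x)$; since $\SZ{1}x\in\RC$, applying $(q|_{\RC})^{-1}$ yields $\SZ{1}x=(q|_{\RC})^{-1}\bar S(x)$. As $(q|_{\RC})^{-1}$ is an isomorphism, $\SZ{1}$ is invertible if and only if $\bar S$ is. Running the identical argument with any other complement $\widetilde{\RC}$ of $\RR$ (projection $\mathrm P_{\widetilde{\RC}}$ onto $\widetilde{\RC}$ along $\RR$, operator $\widetilde S=\mathrm P_{\widetilde{\RC}}\,\AZ{1}{\mid_{\KK}}$) gives $\widetilde S=(q|_{\widetilde{\RC}})^{-1}\circ\bar S$, so $\widetilde S$ is invertible if and only if $\bar S$ is. Since $\bar S$ does not depend on the choice of complement, invertibility (equivalently, noninvertibility) of $\SZ{1}$ is independent of $\RC$, as claimed.

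I expect no serious obstacle beyond verifying that $q|_{\RC}$ is an isomorphism onto $\mathcal B/\RR$, which rests on the decomposition $\mathcal B=\RR\oplus\RC$ together with finite-dimensionality of $\mathcal B/\RR$. An alternative, quotient-free route isolates the same mechanism even more directly: using that $\mathrm P_{\widetilde{\RC}}$ annihilates $\RR$, one checks $\mathrm P_{\widetilde{\RC}}\AZ{1}x=\mathrm P_{\widetilde{\RC}}\PR\AZ{1}x$ for $x\in\KK$, i.e.\ $\widetilde S=(\mathrm P_{\widetilde{\RC}}{\mid_{\RC}})\circ\SZ{1}$, where $\mathrm P_{\widetilde{\RC}}{\mid_{\RC}}:\RC\to\widetilde{\RC}$ is an isomorphism; either formulation delivers the asserted independence.
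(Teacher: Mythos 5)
Your proof is correct, but it follows a genuinely different route from the paper's. The paper fixes two choices $V_0,W_0$ of $\RC$, observes that $\ker S_{V_0}=\ker S_{W_0}=\KKK$ independently of the choice (equation \eqref{lemeq1}), and then invokes the $\FF_0$ property of Lemma \ref{lem0} to get $\dim(\RC)=\dim(\mathcal B/\RR)=\dim(\KK)<\infty$, so that injectivity is equivalent to bijectivity; invertibility of either operator is thereby reduced to the choice-free criterion $\KKK=\{0\}$. You instead factor $\SZ{1}=(q|_{\RC})^{-1}\circ\bar S$ through the quotient $\mathcal B/\RR$, so that the operators attached to different complements all differ from the fixed map $\bar S$ (hence from each other) by composition with an isomorphism. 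Each approach buys something: the paper's kernel identification and the criterion ``invertible iff $\KKK=\{0\}$'' are reused downstream (the standing hypothesis $\KKK\neq\{0\}$ in Section \ref{sholomorphic}.2 and the proof of Lemma \ref{lem5} both lean on \eqref{lemeq1}), so the paper gets those facts in passing; your factorization proves the stronger structural statement that the various $\SZ{1}$'s are equivalent operators (equal rank and nullity, not merely simultaneously invertible or noninvertible). One small correction to your own accounting: the bijectivity of $q|_{\RC}$ follows from the decomposition $\mathcal B=\RR\oplus\RC$ alone, so the index-zero property and the matching dimensions are not actually essential at that step --- finite dimensionality only makes bounded invertibility automatic. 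Your quotient-free variant, writing the operator for a second complement as $\bigl(\mathrm{P}_{\widetilde V_0}{\mid_{V_0}}\bigr)\circ S_{V_0}$ with $\mathrm{P}_{\widetilde V_0}{\mid_{V_0}}:V_0\to\widetilde V_0$ an isomorphism, is likewise correct and is perhaps the shortest complete argument.
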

\begin{proof}
	Let \(V_0\) and \(W_0\) two different choices of \(\RC\). Then it is trivial to show that 
	\begin{align} \label{lemeq1}
		\ker S_{V_0} = \ker S_{W_0} = \KKK.
	\end{align} 
	Moreover, we know due to Lemma \ref{lem0} that  \(A(z)\) satisfying Assumption \ref{assumelaurent} is in fact an \(\FF_0\)-pencil, which implies that \( \dim (\mathcal B/\ran \AZ{0}) = \dim(\ker \AZ{0}) < \infty\). Since a complementary subspace of \(\ran \AZ{0}\) is isomorphic to \(\mathcal B/\ran \AZ{0}\) \citep[Corollary 3.2.16]{megginson1998}, we have 
	\begin{align} \label{lemeq2}
		\dim(V_0) = \dim(W_0) = \dim(\KK) < \infty.
	\end{align}
	Any injective linear map between finite dimensional vector spaces of the same dimension is also  bijective. Therefore in view of \eqref{lemeq2}, \(\KKK = \{0\}\) is necessary and sufficient condition for \(S_{V_0}\) (and \(S_{W_0}\)) to be invertible. Therefore if either of one is invertible (resp.\ noninvertible), then the other is also invertible (resp.\ noninvertible).  
\end{proof}

We next provide necessary and sufficient conditions for  \(A(z)^{-1}\) to have a simple pole at \(z_0\) and its closed form expression in a punctured neighborhood of \(z_0\).  	


\begin{proposition}\label{propn1}
	Suppose that Assumptions \ref{assumelaurent} are satisfied. Then the following conditions are equivalent to each other. 
	\begin{itemize}
		\item[$\mathrm{(i)}$] $m=1$ in the Laurent series expansion \eqref{laurent1}. 
		\item[$\mathrm{(ii)}$]  $\mathcal B = \RR \oplus \AZ{1} \KK$. 
		\item[$\mathrm{(iii)}$] For all possible choices of $\RC$ ,  $\SZ{1} : \KK \to \RC$ is invertible. 
		\item[$\mathrm{(iv)}$] For some choice of $\RC$ ,  $\SZ{1} : \KK \to \RC$ is invertible. 
	\end{itemize}
	Under any of these conditions and any choice of $\RC$ and $\KC$, the coefficients \((\NZ{j} \geq -1)\) in \eqref{laurent1} are given by the following recursive formula.
	\begin{align}
		&\NZ{-1} =  \SZ{r_1}^{-1} \PZ{r_1}, \label{claimfor0}\\
		&\NZ{j} = 	(\indj- \GZ(0,1)) (A_0)^g_{\{\RC,\KC\}} (\id_{\mathcal B}-\AZ{1}\SZ{r_1}^{-1} \PZ{r_1} )-\GZ(1,1)\SZ{r_1}^{-1} \PZ{r_1}, \label{claimfor}
	\end{align}
	where each $N_j$ is understood as a map from \(\mathcal B\) to \(\mathcal B\) without restriction of the codomain. 
\end{proposition}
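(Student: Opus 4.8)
The plan is to pin down the Laurent coefficients $\NZ{j}$ by matching powers of $(z-z_0)$ in the two identities $A(z)N(z)=\id_{\mathcal B}$ and $N(z)A(z)=\id_{\mathcal B}$, feeding in the Taylor series \eqref{powereq} and the Laurent series \eqref{laurent1}; throughout I abbreviate the generalized inverse $(A_0)^g_{\{\RC,\KC\}}$ by $A_0^g$ and recall its property $\AZ{0}A_0^g=\id_{\mathcal B}-\PR$. Forming the Cauchy products, the coefficient of $(z-z_0)^{-m}$ in $A(z)N(z)$ gives $\AZ{0}\NZ{-m}=0$, hence $\ran\NZ{-m}\subseteq\KK$, and for each $j\geq 0$ the coefficient of $(z-z_0)^{j}$ in $N(z)A(z)$ gives the right relation $\NZ{j}\AZ{0}=\indj-\GZ(0,1)$. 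First I would record the two lowest-order consequences in the case $m=1$: from the $(z-z_0)^{-1}$ coefficient, $\AZ{0}\NZ{-1}=0$ (so $\ran\NZ{-1}\subseteq\KK$, and dually $\NZ{-1}=\NZ{-1}\PR$), and from the $(z-z_0)^{0}$ coefficient, $\AZ{0}\NZ{0}+\AZ{1}\NZ{-1}=\id_{\mathcal B}$. Applying $\PR$ to the last identity and using $\PR\AZ{0}=0$ together with $\ran\NZ{-1}\subseteq\KK$ yields the master identity $\SZ{1}\NZ{-1}=\PR$, which drives the rest of the argument.

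With the master identity at hand the equivalences follow quickly. The implication $(\mathrm{iii})\Rightarrow(\mathrm{iv})$ is trivial and $(\mathrm{iv})\Rightarrow(\mathrm{iii})$ is Lemma \ref{lem4}. For $(\mathrm{ii})\Leftrightarrow(\mathrm{iii})$ I would argue by dimension: since $A$ is an $\FF_0$-pencil by Lemma \ref{lem0}, $\dim\RC=\dim\KK<\infty$, and $\ker\SZ{1}=\KKK$ (as in the proof of Lemma \ref{lem4}); thus $\SZ{1}$ is invertible iff $\KKK=\{0\}$, which is exactly the requirement that $\AZ{1}{\mid_{\KK}}$ be injective and $\AZ{1}\KK\cap\RR=\{0\}$, that is, $\mathcal B=\RR\oplus\AZ{1}\KK$ (the sum being closed and of the correct codimension by the same count). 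For $(\mathrm{i})\Rightarrow(\mathrm{iii})$ I would read off from $\SZ{1}\NZ{-1}=\PR$ that $\ran\SZ{1}\supseteq\ran\PR=\RC$, so $\SZ{1}$ is onto and hence, having equal finite-dimensional domain and codomain, invertible; since this holds verbatim for every admissible $\RC$, condition (iii) follows. The one implication requiring real work is $(\mathrm{iv})\Rightarrow(\mathrm{i})$, which I would prove by excluding $m\geq 2$: if the true order were $m\geq 2$, the $(z-z_0)^{-m}$ and $(z-z_0)^{-m+1}$ coefficients of $A(z)N(z)$ read $\AZ{0}\NZ{-m}=0$ and $\AZ{0}\NZ{-m+1}+\AZ{1}\NZ{-m}=0$; applying $\PR$ to the second and using $\ran\NZ{-m}\subseteq\KK$ gives $\SZ{1}\NZ{-m}=0$, whence injectivity of $\SZ{1}$ forces $\NZ{-m}=0$, contradicting that $-m$ is the leading index. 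Since $\AZ{0}$ is noninvertible we also have $m\geq 1$, so $m=1$.

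Finally I would derive the recursion under the now-established hypothesis $m=1$. The coefficient $\NZ{-1}$ is immediate: as $\ran\NZ{-1}\subseteq\KK$ and $\SZ{1}\colon\KK\to\RC$ is invertible, the master identity gives $\NZ{-1}=\SZ{1}^{-1}\PR$, which is \eqref{claimfor0}. For $j\geq 0$ I would start from the right relation $\NZ{j}\AZ{0}=\indj-\GZ(0,1)$; post-multiplying by $A_0^g$ and using $\AZ{0}A_0^g=\id_{\mathcal B}-\PR$ produces the partial expression $\NZ{j}=(\indj-\GZ(0,1))A_0^g+\NZ{j}\PR$, so the task reduces to identifying the cokernel part $\NZ{j}\PR$. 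To obtain it I would take the right relation one order higher, $\NZ{j+1}\AZ{0}+\NZ{j}\AZ{1}+\GZ(1,1)=0$, and right-multiply by $\NZ{-1}$; because $\AZ{0}\NZ{-1}=0$ the unknown $\NZ{j+1}$ drops out, leaving $\NZ{j}\AZ{1}\NZ{-1}=-\GZ(1,1)\NZ{-1}$. Writing $\AZ{1}\NZ{-1}=\AZ{0}A_0^g\AZ{1}\NZ{-1}+\PR\AZ{1}\NZ{-1}$ and simplifying the second summand via $\PR\AZ{1}\NZ{-1}=\SZ{1}\NZ{-1}=\PR$ turns this into $\NZ{j}\PR=-(\indj-\GZ(0,1))A_0^g\AZ{1}\NZ{-1}-\GZ(1,1)\NZ{-1}$; substituting back and factoring gives exactly \eqref{claimfor}.

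The genuinely delicate step is the recovery of the cokernel part $\NZ{j}\PR$: the right relation determines $\NZ{j}$ only on $\ran\AZ{0}=\RR$, that is, precisely the directions on which $\AZ{0}$ behaves invertibly, and the missing information along $\RC$ has to be extracted from the relation one order up, with the master identity $\SZ{1}\NZ{-1}=\PR$ serving to close the gap (this is also exactly where invertibility of $\SZ{1}$ is used). I would stress that no convergence or existence question arises: the analytic Fredholm theorem already guarantees that $N(z)=A(z)^{-1}$ is finitely meromorphic with a unique Laurent expansion, so coefficient matching legitimately identifies the $\NZ{j}$ and the recursion merely puts them in closed form. A closing remark I would make is that, although each of $\SZ{1}$, $\PR$ and $A_0^g$ depends on the chosen complements $\RC,\KC$ while $\NZ{j}$ does not, the right-hand side of \eqref{claimfor} is nonetheless independent of that choice.
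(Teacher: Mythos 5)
Your proposal is correct: every coefficient identity you extract is legitimate (uniqueness of the Laurent series justifies the matching, as you note), the dimension counts are exactly the ones licensed by Lemma \ref{lem0} and Lemma \ref{lem4}, and unwinding your expression for $\NZ{j}\PR$ reproduces \eqref{claimfor} verbatim. It is, however, organized differently from the paper's proof, and the differences are substantive enough to record. The paper runs the cycle (ii)$\Rightarrow$(i)$\Rightarrow$(iv)$\Rightarrow$(ii) (with (iii)$\Leftrightarrow$(iv) from Lemma \ref{lem4}): it kills $\NZ{-m}$ for $m\geq 2$ by using the direct sum in (ii), proves (i)$\Rightarrow$(iv) via the injectivity relation $\id_{\mathcal B}{\mid_{\KK}}=\NZ{-1}\SZ{1}$, and proves (iv)$\Rightarrow$(ii) contrapositively; you instead run (i)$\Rightarrow$(iii)$\Rightarrow$(iv)$\Rightarrow$(i) and attach (ii) through $\ker\SZ{1}=\KKK$ plus a codimension count. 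More substantively, the paper's derivation of the recursion uses only the right expansion $N(z)A(z)=\id_{\mathcal B}$ (its \eqref{idexp}) and recovers the cokernel part $\NZ{j}\PR$ by restricting domains to $\KK$ and inverting $\SZ{1}$ — which is what forces all the bookkeeping about restricted domains, codomains and embeddings — whereas you exploit both expansions: your master identity $\SZ{1}\NZ{-1}=\PR$ comes from the left expansion $A(z)N(z)=\id_{\mathcal B}$, gives you \emph{surjectivity} of $\SZ{1}$ where the paper gets injectivity, and lets you eliminate $\NZ{j+1}$ by right-multiplying the next-order relation by $\NZ{-1}$ so that every object stays an operator on $\mathcal B$; this is genuinely cleaner. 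The only spot you should expand is the parenthetical in your (ii)$\Leftrightarrow$(iii) step: passing from ``$\AZ{1}{\mid_{\KK}}$ injective and $\AZ{1}\KK\cap\RR=\{0\}$'' to $\mathcal B=\RR\oplus\AZ{1}\KK$ requires noting that $\PR$ maps $\AZ{1}\KK$ injectively (trivial intersection) and hence, since $\dim(\AZ{1}\KK)=\dim(\KK)=\dim(\RC)<\infty$, onto $\RC$, so that the sum exhausts $\mathcal B$ — this is exactly the argument the paper deploys for (iv)$\Rightarrow$(ii), and in your write-up it is compressed to a parenthesis rather than proved.
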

\begin{proof} We first show that the claimed equivalence between conditions (i)-(iv), and then verify the recursive formula.\\ 
	\newline		
	\textbf{Equivalence between (i)-(iv)} : Due to the analytic Fredholm theorem, we know that \(A(z)^{-1}\) admits the Laurent series expansion \eqref{laurent1} in a punctured neighborhood \(z_0\). Moreover, \(A(z)\) is holomorphic and thus admits the Taylor series as in \eqref{powereq}.
	Combining \eqref{powereq} and \eqref{laurent1}, we obtain the identity expansion 	\(\id_{\mathcal B} = A(z)^{-1}A(z)\)  as follows.	\begin{align}
		\id_{\mathcal B}&=\sum_{k=-m}^\infty\left(\sum_{j=0}^{m+k}\NZ{k-j}\AZ{j}\right)(z-z_0)^k.\label{idexp}
	\end{align}
	Since (iii) $\Leftrightarrow$ (iv) is deduced from Lemma \ref{lem4}, we demonstrate equivalence between (i)-(iv) by showing (ii)$\Rightarrow$(i)$\Rightarrow$(iv)$\Rightarrow$(ii).
	
	Now we show that (ii)$\Rightarrow$(i). 	Suppose that \(m>1\). Collecting the coefficients of \((z-z_0)^{-m}\) and   \((z-z_0)^{-m+1}\) in \eqref{idexp}, we obtain 		
	\begin{align}
		&\NZ{-m}\AZ{0}=0, \label{N1zero}\\
		&\NZ{-m+1}\AZ{0}+\NZ{-m}\AZ{1}=0. \label{N2zero}
	\end{align}
	Equation \eqref{N1zero} implies that \(	\NZ{-m}\RR = \{0\}\), and further \eqref{N2zero} implies that \(N_{-m}\AZ{1}\KK=\{0\}\). Therefore, if the direct sum decomposition (ii) is true, we necessarily have \(\NZ{-m} = 0\). Note that \(\NZ{-m} = 0\) holds for any \(2 \leq m <\infty \). We therefore conclude that \(m=1\), which proves (ii)$\Rightarrow$(i). 
	
	We next show that (i)$\Rightarrow$(iv). Collecting the coefficients of \((z-z_0)^{-1}\) and   \((z-z_0)^{0}\) in \eqref{idexp} when $m=1$, we have
	\begin{align}
		&\NZ{-1}\AZ{0}=0, \label{N1zero1}\\
		&\NZ{-1}\AZ{1} + \NZ{0}\AZ{0}= \id_{\mathcal B}. \label{N2zero1}
	\end{align}
	Since \(\AZ{0}\) is a Fredholm operator, we know due to Lemma \ref{lem1} that $\RR$ allows a complementary subspace  \(V_0\) and there exists the associated projection operator \( \PP_{V_0} \).  
	Then equation \eqref{N1zero1} implies that 
	\begin{align}
		\NZ{-1}(\id_{\mathcal B}-\PP_{V_0}) = 0 \quad \text{and}\quad  \NZ{-1} = \NZ{-1}\PP_{V_0}. 	 \label{i1eq00}
	\end{align}
	Moreover \eqref{N2zero1} implies $\id_{\mathcal B}{\mid_{\KK}} = \NZ{-1}\AZ{1}{\mid_{\KK}}$. In view of \eqref{i1eq00}, it is apparent that 
	\begin{align}
		\id_{\mathcal B}{\mid_{\KK}} = \NZ{-1}S_{V_0}. \label{i1eq1}
	\end{align}	
	Equation \eqref{i1eq1} implies that  $S_{V_0}$ is an injection. Moreover, due to Lemma \ref{lem0}, we know \(\AZ{0} \in \FF_0\). Using the same arguments we used to establish \eqref{lemeq2}, we obtain
	\begin{align} \label{i1eq2}
		\dim (V_0) = \dim (\mathcal B/\RR)  = \dim(\KK) < \infty.
	\end{align}
	Equations \eqref{i1eq1} and \eqref{i1eq2} together imply that $S_{V_0} : \KK \to   V_0$ is an injective linear map between finite dimensional vector spaces of the same dimension. Therefore, we conclude that $S_{V_0} : \KK \to   V_0$ is a bijection. 
	
	To show (iv)$\Rightarrow$(ii), suppose that our direct sum condition (ii) is false. We  first consider the case where \(\RR \cap \AZ{1}\KK \neq \{0\}\). If there exists a nonzero element $x$ in $\RR \cap \AZ{1}\KK$, we have for any arbitrary choice of $\RC$, \(S_{\RC}x = 0\). This implies that $S_{\RC}$ cannot be injective. We next consider the case where $\mathcal B \neq \RR +  \AZ{1}\KK$ even if \(\RR \cap \AZ{1}\KK = \{0\}\) holds. In this case, clearly \(\RR \oplus  \AZ{1}\KK\) is a strict subspace of \(\mathcal B\). On the other hand, since \(\RC\) is a complementary subspace of \(\RR\), it is deduced that
	\begin{align}
		\dim(\AZ{1}\KK) < \dim(\RC). \label{i1eqq1}
	\end{align}
	Note that \(S_{\RC}\) can be viewed as the composition of \(\PP_{\RC}\) and \(\AZ{1}{\mid_{\KK}}\). 
	From the rank-nullity theorem, \(\dim(S_{\RC}\KK)\) must be at most equal to \(\dim(\AZ{1}\KK)\). In view of \eqref{i1eqq1}, this implies that \(S_{\RC}\) cannot be surjective for any arbitrary choice of $\RC$ . Therefore, we conclude that (iv)$\Rightarrow$(ii).\\
	\newline	
	\textbf{Recursive formula for $(\NZ{j}, j \geq -1)$ } : Assume that  $V_0$ as a choice of \(\RC\) and $W_0$ as a choice of \(\KC\) are fixed. We first verify the claimed formulas \eqref{claimfor0} and \eqref{claimfor} for this specific choice of complementary subspaces. 
	
	At first, we consider the claimed formula for \(\NZ{-1}\). In our demonstration of (i)$\Rightarrow$(iii) above, we obtained \eqref{i1eq1}. Since the codomain of \(S_{V_0}\) is restricted to \(V_0\), \eqref{i1eq1} can be written as 
	\begin{align}\label{i1eq11}
		\id_{\mathcal B}{\mid_{\KK}} = \NZ{-1}{\mid_{V_0}}S_{V_0}.
	\end{align}	
	Moreover we know that \(S_{V_0} : \KK \to {V_0}\) is invertible. We therefore have $\NZ{-1}{\mid_{{V_0}}} = S_{V_0}^{-1}$, where note that we still need to restrict the domain of \(\NZ{-1}\) to ${V_0}$. By composing  both sides of \eqref{i1eq11} with $\PP_{V_0}$, we obtain \(\NZ{-1}\PP_{V_0} = S_{{V_0}}^{-1} \PP_{V_0} \). Recalling \eqref{i1eq00}, which implies that \(\NZ{-1}=\NZ{-1}\PP_{V_0}\), we find that  
	\begin{align}\label{i1eq12}
		\NZ{-1} = S_{{V_0}}^{-1} \PP_{V_0}. 
	\end{align}
	Since the codomain of \(S_{V_0}^{-1}\) is \(K_0\), the map \eqref{i1eq12} is the formula for \(N_{-1}\) with the restricted codomain.  Howeover, it can be understood as a map from \(\mathcal B\) to \(\mathcal B\) by composing both sides of  \eqref{i1eq12}   with a proper embedding. 
	
	Now we verify the recursive formulas for \((\NZ{j}, j \geq 0)\). Collecting the coefficients of $(z-1)^{j}$ and $(z-1)^{j+1}$ in the identity expansion \eqref{idexp}, the following can be shown:  	
	\begin{align}
		&G_j(0,1)
		+ \NZ{j} \AZ{0} = \indj, \label{ideneq1}\\
		&G_j(1,1)
		+  \NZ{j} \AZ{1} + \NZ{j+1} \AZ{0}  = 0. \label{ideneq2}
	\end{align}
	Since \(\id_{\mathcal B} = (\id_{\mathcal B}-\PP_{V_0}) + \PP_{V_0}\), \(\NZ{j}\) can be written as the sum of \(\NZ{j}(\id_{\mathcal B}-P_{V_0})\) and \(\NZ{j}\PP_{V_0}\). We will obtain an explicit formula for each summand.
	
	Given complementary subspaces \({V_0}\) and \(W_0\), we may define \((A_0)^g_{\{V_0,W_0\}} : \mathcal B \to W_0\). Since we have \(\AZ{0}(A_0)^g_{\{V_0,W_0\}} = \id_{\mathcal B}-\PP_{V_0} \), \eqref{ideneq1} implies that 
	\begin{equation} \label{raneq1}
		\NZ{j} (\id_{\mathcal B}-\PP_{V_0}) = 	\indj (A_0)^g_{\{V_0,W_0\}} -G_j(0,1)(A_0)^g_{\{V_0,W_0\}}.
	\end{equation}
	Moreover by restricting the domain of  both sides of \eqref{ideneq2} to $\KK$, we have
	\begin{align}
		G_j(1,1){\mid_{\KK}}
		+  \NZ{j}\AZ{1}{\mid_{\KK}}  = 0.  \label{i1eq13}
	\end{align}
	Since \(\NZ{j} = \NZ{j}\PP_{V_0} + \NZ{j}(\id_{\mathcal B}-\PP_{V_0})\), it is easily deduced  from \eqref{i1eq13} that
	\begin{align}
		\NZ{j}S_{V_0}  = &-G_j(1,1){\mid_{\KK}}- \NZ{j} (\id_{\mathcal B}-\PP_{V_0}) \AZ{1}{\mid_{\KK}}.\label{raneq2}
	\end{align}
	Substituting  \eqref{raneq1} into \eqref{raneq2}, we obtain
	\begin{align}
		\NZ{j}S_{V_0}=&-G_j(1,1){\mid_{\KK}}-\indj (A_0)^g_{\{V_0,W_0\}} \AZ{1}{\mid_{\KK}} + G_j(0,1)
		(A_0)^g_{\{V_0,W_0\}} \AZ{1}{\mid_{\KK}}.  \label{raneq3}
	\end{align}
	Since \(S_{V_0} : \KK \to {V_0} \) is invertible, it is deduced that  \(\ran S_{V_0}^{-1} = \KK\) and $S_{V_0}S_{V_0}^{-1} = \id_{\mathcal B}{\mid_{V_0}}$. We therefore	obtain the following equation from \eqref{raneq3}:
	\begin{align}
		\NZ{j}{\mid_{V_0}}  &= -G_j(1,1)  - \indj (A_0)^g_{\{V_0,W_0\}} \AZ{1}S_{V_0}^{-1} + G_j(0,1)
		(A_0)^g_{\{V_0,W_0\}} \AZ{1}S_{V_0}^{-1}. \label{raneq4}
	\end{align}
	Composing the projection operator of both sides of \eqref{raneq4} with \(\PP_{V_0}\), we then obtain an explicit formula for \(\NZ{j}\PP_{V_0}\). Combining this result with  \eqref{raneq1}, we obtain the formula \(\NZ{j}\) similar to \eqref{claimfor} in terms of \(\PP_{V_0}\), \((A_0)^g_{\{V_0,W_0\}}\), \(S_{V_0}\), \(G_j(0,1)\), and \(G_j(1,1)\)  after a little algebra. Of course, the resulting operator \(N_j\) should be understood as a map from \(\mathcal B\) to \(\mathcal B\).
	
	Our formula for each \(N_j\) that we have obtained seems to depend on our choice of complementary subspaces, especially due to \(\PP_{V_0}\), \(S_{V_0}\) and \((A_0)^g_{\{V_0,W_0\}}\). However, if a Laurent series exists, it is unique. We could differently define the aforementioned operators by choosing different complementary subspaces, and then could obtain a recursive formula for $(N_j, j \geq -1)$ in terms of those operators. However, such a newly obtained formula cannot be different from what we have obtained from a fixed choice of complementary subspaces due to the uniqueness of the Laurent series. Therefore, it is easily deduced that our recursive formula for \(N_j\) derived in Proposition \ref{propn1} does not depend on a specific choice of complementary subspaces.  
\end{proof}

\subsection{Second order poles of holomorphic $\FF_0$ inverses}
To simplify expressions, we let 
\begin{align*}
	&\RRR = \ran \AZ{0} + \AZ{1} \ker \AZ{0},\notag\\
	&\RCC = \text{a complementary subspace of $\ran \AZ{0} + \AZ{1} \ker \AZ{0}$},\notag\\
	&\KCC = \text{a complementary subspace of $\KKK$ in $\KK$}, \notag	\\
	&\PRR=  \text{the bounded projection onto  $\RCC$ along $\RRR$}, \notag \\
	&\PKK=  \text{the bounded projection onto  $\KCC$ along $\KKK$}. \notag 
\end{align*}
We know from Lemma \ref{lem1} that \(\RR\), \( \KK \) and \(\RRR\) are complemented, so we may find complementary subspaces \( \RC \), \( \KC\), and \(\RCC\), as well as the bounded projections \(\PP_{\RC}\), \(\PP_{\KC}\) and \(\PP_{\RCC}\). Given \(\RC\), $\RCC$ is not uniquely determined in general. We require our choice to satisfy
\begin{align} \label{choice}
	\RCC  \subset \RC, 
\end{align}     
so that
\begin{align}
	& \RC = \SZ{1}  \KK \oplus \RCC, \label{subdirecteq}\\
	&\PZ{r_1}\PP_{\RCC} = \PP_{\RCC}\PP_{\RC}= \PP_{\RCC}. \label{projeceq}
\end{align} 
Given $\RC$, a choice of a complementary subspace satisfying \eqref{choice} is always possible, and such a subspace is easily obtained as follows: 
\begin{lemma}\label{lemchoice} Suppose that Assumption \ref{assumelaurent} is satisfied.
	Given $\RC$, let \(V_1\) be a specific choice of $\RCC$. Then \(\PZ{1}V_1 \subset \RC\) is also a complementary subspace of $\RRR$.
\end{lemma}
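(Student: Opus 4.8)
The plan is to verify directly that $\PR V_1$ satisfies the three defining properties of a complementary subspace of $\RRR$: it is closed, it meets $\RRR$ only in $\{0\}$, and together with $\RRR$ it spans $\mathcal B$. The containment $\PR V_1 \subset \RC$ is immediate, since $\RC$ is by definition the range of the projection $\PR$. The engine of the whole argument is the single observation that $\RR = \ran \AZ{0} \subset \RRR$, together with the decomposition $v = \PR v + (\id_{\mathcal B} - \PR) v$ valid for every $v \in V_1$, in which the second summand $(\id_{\mathcal B} - \PR) v$ lies in $\RR \subset \RRR$. In words, passing from $v$ to $\PR v$ only changes $v$ by an element of $\RRR$, so complementarity with $\RRR$ is preserved.

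First I would establish the spanning property $\mathcal B = \RRR + \PR V_1$. Given $x \in \mathcal B$, use $\mathcal B = \RRR \oplus V_1$ to write $x = \rho + v$ with $\rho \in \RRR$ and $v \in V_1$; then substituting $v = \PR v + (\id_{\mathcal B}-\PR)v$ and absorbing $(\id_{\mathcal B}-\PR) v \in \RR \subset \RRR$ into the $\RRR$-component expresses $x$ as an element of $\RRR$ plus $\PR v \in \PR V_1$. Next I would show $\RRR \cap \PR V_1 = \{0\}$: if $y = \PR v \in \RRR$ for some $v \in V_1$, then $v = y + (\id_{\mathcal B}-\PR)v$ is a sum of two elements of $\RRR$, hence $v \in \RRR \cap V_1 = \{0\}$, which forces $y = 0$. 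The same computation shows that $\PR$ restricted to $V_1$ is injective, so $\PR V_1$ has the expected dimension.

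It remains to check that $\PR V_1$ is closed, which is required by the definition of an internal direct sum. Here I would invoke the Fredholm structure: since $\ran \AZ{0}$ is closed of finite codimension and $\AZ{1}\KK$ is finite-dimensional, $\RRR$ is closed with $\mathcal B/\RRR$ finite-dimensional, so its complement $V_1$, being isomorphic to $\mathcal B/\RRR$, is finite-dimensional; consequently $\PR V_1$, as the image of a finite-dimensional space under the bounded operator $\PR$, is finite-dimensional and therefore closed. Collecting the three properties yields $\mathcal B = \RRR \oplus \PR V_1$, i.e.\ $\PR V_1 \subset \RC$ is a complementary subspace of $\RRR$. I do not anticipate a genuine obstacle; the only point requiring care is the closedness clause, which is precisely where the finite-dimensionality furnished by the index-zero Fredholm hypothesis is used, rather than mere boundedness of $\PR$.
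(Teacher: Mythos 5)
Your proof is correct, but it follows a genuinely different route from the paper's. The paper works at the level of the intermediate decomposition $\mathcal B = \RR \oplus \PR\AZ{1}\KK \oplus V_1$: it forms $M := \PR\AZ{1}\KK \oplus V_1$, observes that $M$ and $\RC$ are both complements of $\RR$, invokes the Fredholm structure (via the dimension count $\dim(\mathcal B/\RR) = \dim(\RC) = \dim(M) < \infty$) to conclude that the surjection $\PR{\mid_M} : M \to \RC$ is in fact a bijection, and from injectivity deduces that $\PR$ carries the direct sum $\PR\AZ{1}\KK \oplus V_1$ onto the direct sum $\PR\AZ{1}\KK \oplus \PR V_1$, whence $\mathcal B = \RR \oplus \PR\AZ{1}\KK \oplus \PR V_1$. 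You instead verify complementarity of $\PR V_1$ with $\RRR$ directly, and your engine is purely algebraic: since $\ker \PR = \RR \subset \RRR$, the identity $v = \PR v + (\id_{\mathcal B}-\PR)v$ shows that replacing $v$ by $\PR v$ perturbs it only by an element of $\RRR$, which immediately gives both $\mathcal B = \RRR + \PR V_1$ and $\RRR \cap \PR V_1 = \{0\}$ with no dimension counting at all; finite dimensionality (from the Fredholm property of $\AZ{0}$) enters only to guarantee that $\PR V_1$ is closed, as the definition of an internal direct sum requires. Your argument is more elementary and slightly more general — the algebraic part would survive in any setting where $\ker \PR \subset \RRR$ and closedness of $\PR V_1$ can be secured by other means — and it also dispenses with the paper's separate treatment of the trivial case $V_1 = \{0\}$. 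The paper's argument, for its part, produces useful by-products along the way: the bijectivity of $\PR{\mid_M}$ and the explicit decomposition $\RC = \PR\AZ{1}\KK \oplus \PR V_1 = \SZ{1}\KK \oplus \PR V_1$, which is exactly the relation \eqref{subdirecteq} exploited in the sequel to Lemma \ref{lemchoice}. (With your proof, that relation still follows, but requires the extra one-line observation that a complement of $\RRR$ contained in $\RC$ complements $\SZ{1}\KK$ inside $\RC$.)
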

\begin{proof}
	Let \(V_0\) be a given choice of \({\RC}\).		
	If $\mathcal B = \RR + \AZ{1} \KK $, then \(V_1 = \{0\}\), then our statement trivially holds. Now consider the case when \(V_1\) is a nontrivial subspace. 
	Since we have $\RR + \AZ{1}\KK = \RR \oplus \PP_{V_0}\AZ{1}\KK$ holds, it is  deduced that $\mathcal B = \RR \oplus \PP_{V_0}\AZ{1}\KK \oplus V_1 $. This implies that \(M \coloneqq \PP_{V_0}\AZ{1}\KK \oplus V_1\) is a complementary subspace of \(\RR\). Since  \(\PP_{V_0}\mathcal B = \PP_{V_0}M\), clearly \(\PP_{V_0}{\mid_{M}} : M \to  V_0\) must be a surjection, so we have 
	\begin{align}
		\PP_{V_0} M = V_0. \label{lemeq00}
	\end{align}
	Moreover, both \(M\) and \( V_0\) are complementary subspaces of \(\RR\), and we know due to Lemma \ref{lem0} that \(\AZ{0} \in \FF_0\). Then it is deduced from  similar arguments to those we used to derive \eqref{lemeq2} that 
	\begin{align*}
		\dim(\mathcal B/\RR) = \dim(V_0) = \dim(M).
	\end{align*}
	Thus, \(\PP_{V_0}{\mid_{M}} : M \to  \RC\) is a surjection between vector spaces of the same finite dimension, meaning that it is also an injection. We therefore obtain  \(\PP_{V_0} \AZ{1}\KK \cap \PP_{V_0} V_1 = \{0\}\), which implies that \(\PP_{V_0}M = \PP_{V_0} \AZ{1}\KK \oplus  \PP_{V_0} V_1\). Combining this with \eqref{lemeq00}, it is deduced that
	\begin{align*}
		\mathcal B = \RR  \oplus \PP_{V_0} \AZ{1}\KK \oplus  \PP_{V_0} V_1.
	\end{align*}
	Clearly \(\PP_{V_0} V_1 \) is a complementary subspace of \(\RRR\).
\end{proof}
\noindent Due to Lemma \ref{lemchoice}, we know how to make an arbitrary choice of \(\RCC \) satisfy the requirement \eqref{choice} and thus may assume that our choice of \(\RCC\) satisfies \eqref{choice} in the subsequent discussion.

Under any choice of our complementary subspaces satisfying \eqref{choice}, we define
\begin{align*}
	&A^{\dagger}_{2\,\{\RC, \KC\}} = \AZ{2} - \AZ{1}(A_0)^g_{\{\RC, \KC\}}\AZ{1}, \\
	&S^\dagger_{\{\RC, \KC, \RCC\}} = \PP_{\RCC}A^{\dagger}_{2\,\{\RC, \KC\}}{\mid_{\KKK}} : \KKK \to \RCC,
\end{align*} 
where subscripts also indicate the collection of complementary subspaces upon which the above operators depend. 

In this section, we consider the case \(\KKK \neq \{0\}\).  Then \(\SZ{r_1}\) is not invertible since \(\ker \SZ{r_1} = \KKK\). However, note that \(\SZ{r_1}\) is a linear map between finite dimensional subspaces, so we can always define its generalized inverse as follows:
\begin{align}
	\SG = \left(\SZ{r_1}{\mid_{\KCC}} \right)^{-1}(\id_{\mathcal B}-\PP_{\RCC}){\mid_{\RC}}.\label{sgeneral}
\end{align}

Before stating our main proposition of this section, we first establish the following preliminary result.
\begin{lemma}\label{lem5}
	Suppose that Assumption \ref{assumelaurent} is satisfied. Let \(V_0\) and \(\widetilde{V}_0\) be arbitrary choices of \(\RC\), and
	\(V_1 \subset V_0\) and \(\widetilde{V}_1 \subset \widetilde{V}_0\) be arbitrary choices of \(\RCC\). Then 
	\begin{align*}
		\dim(V_1) = \dim(\widetilde{V_1}) = \dim(\KKK).	
	\end{align*}
\end{lemma}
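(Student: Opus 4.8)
The plan is to show that every complementary subspace of $\RRR = \ran\AZ{0} + \AZ{1}\ker\AZ{0}$ has dimension equal to $\dim(\KKK)$; since this quantity makes no reference to any choice, both equalities in the statement follow at once. First I would invoke \citep[Corollary 3.2.16]{megginson1998}, by which any complementary subspace of a subspace $V$ is isomorphic to $\mathcal B/V$. Applying this with $V = \RRR$ yields $\dim(V_1) = \dim(\widetilde V_1) = \dim(\mathcal B/\RRR)$, so the containments $V_1 \subset V_0$ and $\widetilde V_1 \subset \widetilde V_0$ are immaterial for the dimension count, and it remains only to prove $\dim(\mathcal B/\RRR) = \dim(\KKK)$.

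Next I would compute $\dim(\mathcal B/\RRR)$ by exploiting the inclusion $\RR \subset \RRR$. Fixing a choice $V_0$ of $\RC$ with projection $\PP_{V_0}$, I would recall from the proof of Lemma \ref{lemchoice} that $\RRR = \RR + \AZ{1}\KK = \RR \oplus \PP_{V_0}\AZ{1}\KK$; since $S_{V_0} = \PP_{V_0}\AZ{1}{\mid_{\KK}}$, this reads $\RRR = \RR \oplus S_{V_0}\KK$, so passing to the quotient by $\RR$ identifies $\RRR/\RR$ with $S_{V_0}\KK$. Because $\AZ{0}$ is a Fredholm operator of index zero by Lemma \ref{lem0}, the same reasoning that produced \eqref{lemeq2} gives $\dim(\mathcal B/\RR) = \dim(\KK) < \infty$, so every quotient in sight is finite dimensional and the third isomorphism theorem yields $\dim(\mathcal B/\RRR) = \dim(\mathcal B/\RR) - \dim(\RRR/\RR) = \dim(\KK) - \dim(S_{V_0}\KK)$.

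Finally I would apply the rank-nullity theorem to $S_{V_0} : \KK \to V_0$. By \eqref{lemeq1} we have $\ker S_{V_0} = \KKK$, whence $\dim(S_{V_0}\KK) = \dim(\KK) - \dim(\KKK)$; substituting this into the previous identity gives $\dim(\mathcal B/\RRR) = \dim(\KK) - (\dim(\KK) - \dim(\KKK)) = \dim(\KKK)$, as required. I do not anticipate a genuine obstacle: once Lemma \ref{lem0} forces everything into the finite-dimensional regime, the quotient and rank-nullity bookkeeping is routine. The one point deserving a line of care is the identification $\RRR/\RR \cong S_{V_0}\KK$, i.e.\ that reducing $\AZ{1}\KK$ modulo $\RR$ produces exactly $\PP_{V_0}\AZ{1}\KK = S_{V_0}\KK$; this is precisely the direct-sum fact used in Lemma \ref{lemchoice}, so it may be cited rather than reproved. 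Alternatively, when $V_1 \subset V_0$ one may bypass the quotient entirely by invoking the decomposition \eqref{subdirecteq}, namely $V_0 = S_{V_0}\KK \oplus V_1$, and reading off $\dim(V_1) = \dim(V_0) - \dim(S_{V_0}\KK)$ directly.
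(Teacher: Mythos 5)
Your proof is correct, but it takes a genuinely different route from the paper's. The paper exploits the nesting $V_1 \subset V_0$ in an essential way: from $\RRR = \RR \oplus S_{V_0}\KK$ and $\mathcal B = \RR \oplus S_{V_0}\KK \oplus V_1$ it deduces the internal decomposition $V_0 = S_{V_0}\KK \oplus V_1$ (equation \eqref{lemeq5}), and then compares two dimension counts of $V_0$: one from that decomposition, the other from $\dim(V_0) = \dim(\KK) = \dim(S_{V_0}\KK) + \dim(\KKK)$, which combines the argument behind \eqref{lemeq2} with rank--nullity applied to $S_{V_0}$ using $\ker S_{V_0} = \KKK$ from \eqref{lemeq1}. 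You instead work at the level of the quotient: since every complementary subspace of $\RRR$ is isomorphic to $\mathcal B/\RRR$, the nestings $V_1 \subset V_0$ and $\widetilde V_1 \subset \widetilde V_0$ become immaterial, and the whole lemma reduces to the single computation $\dim(\mathcal B/\RRR) = \dim(\mathcal B/\RR) - \dim(\RRR/\RR) = \dim(\KK) - \dim(S_{V_0}\KK) = \dim(\KKK)$, via the third isomorphism theorem and the same rank--nullity step. Your argument is slightly more general --- it shows that \emph{any} complementary subspace of $\RRR$, whether or not it sits inside a chosen $\RC$, has dimension $\dim(\KKK)$ --- and it cleanly isolates the choice-free invariant $\dim(\mathcal B/\RRR)$. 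What the paper's route buys in exchange is the decomposition $V_0 = S_{V_0}\KK \oplus V_1$ itself, i.e.\ \eqref{subdirecteq}, which is not a throwaway step: it is reused later to define the generalized inverse \eqref{sgeneral} and in the proof of Proposition \ref{propn2}. Your closing alternative (reading $\dim(V_1) = \dim(V_0) - \dim(S_{V_0}\KK)$ off that decomposition) is in fact essentially the paper's own proof, so you have both arguments in hand.
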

\begin{proof}
	For \(V_0\) and \(\widetilde{V_0}\), we have two defined operators \(S_{V_0} : \KK \to V_0\) and \(S_{\widetilde{V}_0}:\KK \to \widetilde{V}_0\). We established that \(\ker S_{V_0}=\ker S_{\widetilde{V}_0} = \KKK\) in \eqref{lemeq1}. 
	From Lemma \ref{lem0}, we know \(\AZ{0} \in \FF_0 \), so it is easily deduced that
	\begin{align}
		&\dim(V_0) = \dim(\KK) = \dim(S_{V_0}\KK) + \dim(\KKK),\label{lemeq3}  \\ 
		&\dim(\widetilde{V}_0) = \dim(\KK) = \dim(S_{\widetilde{V}_0}\KK) + \dim(\KKK). \label{lemeq4}  
	\end{align}
	In each of \eqref{lemeq3} and \eqref{lemeq4}, the first equality is deduced from the same argument to those we used to derive \eqref{lemeq2}, and the second equality is justified by the rank-nullity theorem. 
	Moreover, the following  direct sum decompositions are allowed:
	\begin{align}
		V_0  &= S_{V_0}\KK  \oplus V_1, \label{lemeq5} \\ 
		\widetilde{V}_0  &= S_{\widetilde{V}_0}\KK  \oplus \widetilde{V}_1.\label{lemeq6}
	\end{align}
	To see why \eqref{lemeq5} and \eqref{lemeq6}  are true, first note that we have \(\RR + \AZ{1}\KK = \RR \oplus  S_{V_0}\KK = \RR \oplus  S_{\widetilde{V}_0}\KK\). We thus have 
	\(\mathcal B = \RR \oplus  S_{V_0}\KK \oplus V_1=\RR \oplus  S_{\widetilde{V}_0}\KK \oplus \widetilde{V}_1\). These direct sum conditions imply that \(S_{V_0}\KK \oplus V_1\) and \(S_{\widetilde{V}_0}\KK \oplus \widetilde{V}_1\) are  complementary subspaces of \(\RR\). Since $V_1 \subset V_0$ and $\widetilde{V}_1 \subset \widetilde{V}_0$, \eqref{lemeq5} and \eqref{lemeq6} are established. Now it is deduced from \eqref{lemeq5} and \eqref{lemeq6} that 
	\begin{align}
		\dim(V_0) &= \dim(S_{V_0}\KK) + \dim(V_1),  \label{lemeq7} \\ 
		\dim(\widetilde{V}_0) &= \dim(S_{\widetilde{V}_0}\KK) + \dim(\widetilde{V}_1). \label{lemeq8}
	\end{align}
	Comparing \eqref{lemeq3} and \eqref{lemeq7}, we obtain \(\dim(\KKK) = \dim(V_1)\). Additionally from \eqref{lemeq4} and \eqref{lemeq8}, we obtain \(\dim(\KKK) = \dim (\widetilde{V}_1)\).
\end{proof}	


Now we provide necessary and sufficient conditions for \(A(z)^{-1}\) to have a second order pole at $z_0$ and its closed-form expression in a punctured neighborhood of $z_0$. 

\begin{proposition}\label{propn2}
	Suppose that Assumptions \ref{assumelaurent} are satisfied and \(\KKK \neq \{0\}\). Then the following conditions are equivalent to each other. 
	\begin{itemize}
		\item[$\mathrm{(i)}$] $m=2$ in the Laurent series expansion \eqref{laurent1}. 
		\item[$\mathrm{(ii)}$]  For some choice of $\RC$, $\KC$, we have 
		\begin{align*}
			\mathcal B = \RRR \oplus A^{\dagger}_{2\,\{\RC, \KC\}} \KKK.
		\end{align*}
		\item[$\mathrm{(iii)}$]   For all possible choices of $\RC$, $\KC$, and $\RCC$ satisfying \eqref{choice}, $S^\dag_{\{\RC, \KC, \RCC\}} : \KKK \to \RCC \text{ is invertible. } $
		\item[$\mathrm{(iv)}$]   For some choice of $\RC$, $\KC$, and $\RCC$ satisfying \eqref{choice},
		$S^\dag_{\{\RC, \KC, \RCC\}} : \KKK \to \RCC \text{ is invertible. } $
	\end{itemize}
	Under any of these conditions and any choice of complementary subspaces satisfying \eqref{choice}, the coefficients \((\NZ{j} \geq -2)\) in \eqref{laurent1} are given by the following recursive formula.
	\begin{align}
		\NZ{-2}  &=  (S^\dagger_{\{\RC, \KC, \RCC\}})^{-1} \PP_{\RCC}, \label{n2claim} \\
		\NZ{-1}  &= \left(\QR\SG \PP_{\RC} - \NZ{-2} \AZ{1} (\AZ{0})^g_{\{\RC,\KC\}}  \right)\QL \nonumber\\ 
		\quad\quad\quad\quad & \quad-\QR(\AZ{0})^g_{\{\RC,\KC\}}  \AZ{1}\NZ{-2}
		- \NZ{-2} A^{\dagger}_{3\,\{\RC,\KC\}}   \NZ{-2}, \label{n1claim}\\
		\NZ{j} &= \left( G_j(1,2)(\AZ{0})^g_{\{\RC,\KC\}}\AZ{1} - G_j(2,2)\right)\NZ{-2}  \notag \\ 
		&\quad+ \left(\indj-G_j(0,2)\right)(\AZ{0})^g_{\{\RC,\KC\}}\left(\id_{\mathcal B}-A_1\SG \PP_{\RC}\right)\QL  \notag \\
		&\quad- G_j(1,2)\SG\PP_{\RC}\QL.  \label{njclaim}
	\end{align} 
	\normalsize
	
	where, 	\begin{align*}
		A^{\dagger}_{3\,\{\RC,\KC\}} &=  \AZ{3}-\AZ{1}(\AZ{0})^g_{\{\RC,\KC\}}\AZ{1}(\AZ{0})^g_{\{\RC,\KC\}}\AZ{1},\\
		\QL &= \id_{\mathcal B} - A^{\dagger}_{2\,\{\RC,\KC\}}\NZ{-2},\\ 
		\QR  &= \id_{\mathcal B} - \NZ{-2} A^{\dagger}_{2\,\{\RC,\KC\}}.
	\end{align*}
	Each $N_j$  is understood as a map from \(\mathcal B\) to \(\mathcal B\) without restriction of the codomain. 	
\end{proposition}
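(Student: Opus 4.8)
The plan is to mirror the proof of Proposition~\ref{propn1}, but working one level deeper: the triple $(\RRR,\KKK,A^{\dagger}_{2\,\{\RC,\KC\}})$ now plays the role that $(\RR,\KK,\AZ{1})$ played for the simple pole. Since $\KKK\neq\{0\}$, Proposition~\ref{propn1} already rules out $m=1$, so $m\geq 2$ throughout, and everything is driven by the identity expansion \eqref{idexp} of $\id_{\mathcal B}=A(z)^{-1}A(z)$. Collecting the coefficients of $(z-z_0)^{-m}$, $(z-z_0)^{-m+1}$ and $(z-z_0)^{-m+2}$ gives $\NZ{-m}\AZ{0}=0$, $\NZ{-m+1}\AZ{0}+\NZ{-m}\AZ{1}=0$ and $\NZ{-m+2}\AZ{0}+\NZ{-m+1}\AZ{1}+\NZ{-m}\AZ{2}=0$. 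The first two force $\NZ{-m}\RRR=\{0\}$, hence $\NZ{-m}=\NZ{-m}\PRR$. Substituting $\NZ{-m+1}\AZ{1}{\mid_{\KKK}}=-\NZ{-m}\AZ{1}(\AZ{0})^g_{\{\RC,\KC\}}\AZ{1}{\mid_{\KKK}}$ — which follows from the second equation together with $\AZ{1}{\mid_{\KKK}}=\AZ{0}(\AZ{0})^g_{\{\RC,\KC\}}\AZ{1}{\mid_{\KKK}}$ (valid since $\AZ{1}\KKK\subset\RR$) — into the third equation restricted to $\KKK$ collapses it to the key reduction identity: $\NZ{-m}A^{\dagger}_{2\,\{\RC,\KC\}}{\mid_{\KKK}}$ equals $0$ if $m>2$ and equals $\id_{\mathcal B}{\mid_{\KKK}}$ if $m=2$, the right-hand side being the coefficient of $(z-z_0)^{-m+2}$.

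I would prove the equivalences through the cycle (ii)$\Rightarrow$(i)$\Rightarrow$(iii)$\Rightarrow$(iv)$\Rightarrow$(ii), with (iii)$\Rightarrow$(iv) trivial. For (ii)$\Rightarrow$(i): if $m>2$ the reduction identity gives $\NZ{-m}A^{\dagger}_{2\,\{\RC,\KC\}}{\mid_{\KKK}}=0$, which together with $\NZ{-m}\RRR=\{0\}$ and the splitting in (ii) forces $\NZ{-m}=0$, contradicting that $m$ is the pole order; hence $m=2$. For (i)$\Rightarrow$(iii): with $m=2$ the identity reads $\NZ{-2}A^{\dagger}_{2\,\{\RC,\KC\}}{\mid_{\KKK}}=\id_{\mathcal B}{\mid_{\KKK}}$, and since $\NZ{-2}=\NZ{-2}\PRR$ this exhibits $\NZ{-2}{\mid_{\RCC}}$ as a left inverse of $S^\dagger_{\{\RC,\KC,\RCC\}}=\PRR A^{\dagger}_{2\,\{\RC,\KC\}}{\mid_{\KKK}}$, forcing injectivity; as $\dim\RCC=\dim\KKK<\infty$ by Lemma~\ref{lem5}, it is therefore invertible, for every admissible choice. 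For (iv)$\Rightarrow$(ii): invertibility of $S^\dagger_{\{\RC,\KC,\RCC\}}$ gives both $\RRR\cap A^{\dagger}_{2\,\{\RC,\KC\}}\KKK=\{0\}$ and injectivity of $A^{\dagger}_{2\,\{\RC,\KC\}}{\mid_{\KKK}}$, so $\dim A^{\dagger}_{2\,\{\RC,\KC\}}\KKK=\dim\KKK=\dim(\mathcal B/\RRR)$; thus the finite-dimensional (hence closed) subspace $A^{\dagger}_{2\,\{\RC,\KC\}}\KKK$ is a complement of the closed subspace $\RRR$, which is (ii).

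For the coefficients I would fix admissible choices and use the splitting $\mathcal B=\RRR\oplus A^{\dagger}_{2\,\{\RC,\KC\}}\KKK$ furnished by (ii), whose complementary projections are exactly $\QL=\id_{\mathcal B}-A^{\dagger}_{2\,\{\RC,\KC\}}\NZ{-2}$ (onto $\RRR$) and $A^{\dagger}_{2\,\{\RC,\KC\}}\NZ{-2}$ (onto $A^{\dagger}_{2\,\{\RC,\KC\}}\KKK$), the latter being idempotent because $\NZ{-2}A^{\dagger}_{2\,\{\RC,\KC\}}\NZ{-2}=\NZ{-2}$. Writing the coefficients of $(z-z_0)^{j}$, $(z-z_0)^{j+1}$, $(z-z_0)^{j+2}$ as $G_j(0,2)+\NZ{j}\AZ{0}=\indj$, $G_j(1,2)+\NZ{j}\AZ{1}+\NZ{j+1}\AZ{0}=0$ and $G_j(2,2)+\NZ{j}\AZ{2}+\NZ{j+1}\AZ{1}+\NZ{j+2}\AZ{0}=0$, I split $\NZ{j}=\NZ{j}\QL+\NZ{j}A^{\dagger}_{2\,\{\RC,\KC\}}\NZ{-2}$. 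For the second summand, restricting the third equation to $\KKK$, using $\AZ{1}{\mid_{\KKK}}=\AZ{0}(\AZ{0})^g_{\{\RC,\KC\}}\AZ{1}{\mid_{\KKK}}$ and the second equation to eliminate $\NZ{j+1}\AZ{0}$, yields $\NZ{j}A^{\dagger}_{2\,\{\RC,\KC\}}{\mid_{\KKK}}=(G_j(1,2)(\AZ{0})^g_{\{\RC,\KC\}}\AZ{1}-G_j(2,2)){\mid_{\KKK}}$; post-composing with $\NZ{-2}$ gives the first line of \eqref{njclaim}. For $\NZ{j}\QL$, since $\ran\QL=\RRR=\RR\oplus\SZ{r_1}\KK$, I split further via $\id_{\mathcal B}=(\id_{\mathcal B}-\PR)+(\PR-\PRR)$: the first equation and $(\AZ{0})^g_{\{\RC,\KC\}}$ give $\NZ{j}(\id_{\mathcal B}-\PR)$ as in \eqref{raneq1}, while the second equation restricted to $\KK$ and post-composed with $\SG$, using $\SZ{r_1}\SG\PR=\PR-\PRR$, gives $\NZ{j}(\PR-\PRR)$; assembling and right-multiplying by $\QL$ reproduces the last two lines of \eqref{njclaim}. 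The leading term $\NZ{-2}=(S^\dagger_{\{\RC,\KC,\RCC\}})^{-1}\PRR$ follows from $\NZ{-2}{\mid_{\RCC}}=(S^\dagger_{\{\RC,\KC,\RCC\}})^{-1}$ and $\NZ{-2}=\NZ{-2}\PRR$ exactly as \eqref{i1eq12} was obtained, and $\NZ{-1}$ is the same computation at the bottom of the recursion, where the $A^{\dagger}_{2\,\{\RC,\KC\}}\KKK$-slice now draws on the coefficient of $(z-z_0)^{1}$ and so produces the third-order correction $A^{\dagger}_{3\,\{\RC,\KC\}}$. Choice-independence of the $\NZ{j}$ then follows verbatim from the uniqueness-of-Laurent-series argument closing Proposition~\ref{propn1}.

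The main obstacle is not conceptual but the bookkeeping in the last paragraph: one must repeatedly trade $\NZ{j+1}$-terms for $G_j(\cdot,2)$-terms via $\AZ{1}{\mid_{\KKK}}=\AZ{0}(\AZ{0})^g_{\{\RC,\KC\}}\AZ{1}{\mid_{\KKK}}$ and the $(z-z_0)^{j+1}$-equation, so that the recursion closes with no stray $G_{j+1}$ or $\NZ{j+1}$ contributions; verify the idempotence and the range/kernel of $A^{\dagger}_{2\,\{\RC,\KC\}}\NZ{-2}$; and check the projection identities $\SZ{r_1}\SG\PR=\PR-\PRR$, $\PRR\QL=0$ and \eqref{projeceq}. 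The operators $\QL$ and $\QR$ are precisely the devices that package these cancellations, and the base case $\NZ{-1}$ is the most laborious instance because it alone reaches one order higher and hence involves $A^{\dagger}_{3\,\{\RC,\KC\}}$.
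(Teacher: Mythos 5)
Your proposal is correct, and its skeleton matches the paper's proof — the same implication cycle (ii)$\Rightarrow$(i)$\Rightarrow$(iii), (iii)$\Rightarrow$(iv) trivial, (iv)$\Rightarrow$(ii), the same coefficient matching in the identity expansion, the dimension count of Lemma \ref{lem5}, and uniqueness of the Laurent series for choice-independence — but three of your local arguments genuinely differ, each in a streamlining direction. First, you work only with the one-sided expansion $\id_{\mathcal B}=A(z)^{-1}A(z)$: the insertion trick $\AZ{1}{\mid_{\KKK}}=\AZ{0}(\AZ{0})^g_{\{\RC,\KC\}}\AZ{1}{\mid_{\KKK}}$ lets the $(z-z_0)^{-m+1}$ equation eliminate $\NZ{-m+1}\AZ{1}{\mid_{\KKK}}$, so you reach the reduction identity ($\NZ{-m}A^{\dagger}_{2\,\{\RC,\KC\}}{\mid_{\KKK}}$ equal to $0$ for $m>2$ and to $\id_{\mathcal B}{\mid_{\KKK}}$ for $m=2$) without the right expansion $\id_{\mathcal B}=A(z)A(z)^{-1}$, which the paper invokes (through $\AZ{0}\NZ{-m+1}=-\AZ{1}\NZ{-m}$ and equation \eqref{n1partial1}) to kill the same cross terms. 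Second, your (i)$\Rightarrow$(iii) is direct: the $m=2$ identity together with $\NZ{-2}=\NZ{-2}\PP_{\RCC}$ exhibits $\NZ{-2}{\mid_{\RCC}}$ as a left inverse of $S^\dagger_{\{\RC,\KC,\RCC\}}$, and Lemma \ref{lem5} upgrades injectivity to bijectivity; the paper instead argues by contradiction, showing that a kernel vector of $S^\dagger_{\{\RC,\KC,\RCC\}}$ forces $\NZ{j}\neq 0$ for some $j\leq -3$. Third, and most substantively, your computation of the coefficients is organized around the oblique projection pair $\QL$ and $A^{\dagger}_{2\,\{\RC,\KC\}}\NZ{-2}$ attached to the splitting $\mathcal B=\RRR\oplus A^{\dagger}_{2\,\{\RC,\KC\}}\KKK$ (idempotence following from $\NZ{-2}A^{\dagger}_{2\,\{\RC,\KC\}}\NZ{-2}=\NZ{-2}$), so that the slice $\NZ{j}A^{\dagger}_{2\,\{\RC,\KC\}}\NZ{-2}$ lands directly on the first line of \eqref{njclaim}; the paper instead uses the three-part decomposition \eqref{iddecomp}, extracts $\NZ{j}\PP_{\RCC}$ by composing \eqref{raneqq7} with $(S^\dagger_{\{\RC,\KC,\RCC\}})^{-1}\PP_{\RCC}$, and must reassemble the $\QL$, $\QR$ form of the stated formulas by unexhibited algebra. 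Your organization has the merit of explaining why $\QL$ and $\QR$ appear in \eqref{n1claim}--\eqref{njclaim} at all, at the cost of leaving the $\NZ{-1}$ case as a sketch; but that sketch is structurally sound — it is exactly the elimination of $\NZ{0}$ via the $(z-z_0)^{1}$ coefficient that the paper carries out in \eqref{i2n16}--\eqref{i222eqq3}, and the algebra identity $\NZ{-2}\bigl(A^{\dagger}_{2\,\{\RC,\KC\}}(\AZ{0})^g_{\{\RC,\KC\}}\AZ{1}-A^{\dagger}_{3\,\{\RC,\KC\}}\bigr)\NZ{-2}=\NZ{-2}\bigl(\AZ{2}(\AZ{0})^g_{\{\RC,\KC\}}\AZ{1}-\AZ{3}\bigr)\NZ{-2}$ needed to close it is the same one the paper uses to pass from \eqref{i222eqq1} to \eqref{i222eqq2}.
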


\begin{proof}
	We first establish some results are repeatedly mentioned in the subsequent proof. Given any choice of complementary subspaces satisfying \eqref{choice}, the following identity decomposition is easily deduced from  \eqref{projeceq}:
	\begin{align}\label{iddecomp}
		\id_{\mathcal B} = ( \id_{\mathcal B} - \PP_{\RC}) + (\id_{\mathcal B} - \PP_{\RCC})\PP_{\RC} + \PP_{\RCC}.
	\end{align}
	Since we have \(\RRR = \RR + \AZ{1} \KK =  \RR \oplus \SZ{r_1}\KK\), our direct sum condition (ii) is  equivalent to  
	\begin{align}\label{directsum22}
		\mathcal B =   \RR \oplus \SZ{r_1}\KK \oplus A^\dagger_{2\,\{\RC,\KC\}} \KKK.
	\end{align} 
	Moreover, we may obtain the following expansion of the identity from \eqref{powereq} and \eqref{laurent1}:
	\begin{align}
		\id_{\mathcal B} &=\sum_{k=-m}^\infty\left(\sum_{j=0}^{m+k}\NZ{k-j}\AZ{j}\right)(z-z_0)^k\label{idexp2} \\
		&=\sum_{k=-m}^\infty\left(\sum_{j=0}^{m+k}\AZ{j}\NZ{k-j}\right)(z-z_0)^k.\label{idexp3}	
	\end{align}\\


\textbf{Equivalence between (i)-(iv)} :
Since (iii) $\Rightarrow$ (iv) is trivial, we will show that (ii)$\Rightarrow$(i)$\Rightarrow$(iii) and (iv) $\Rightarrow$ (ii).

To show (ii)\(\Rightarrow\)(i), let \(V_0\) (resp.\ $W_0$) be a choice of \(\RC\) (resp.\ $\KC$), and the direct sum condition (ii) holds for \(V_0\) and \(W_0\). Since \(\ker S_{V_0} = \KKK \neq \{0\}\), \(S_{V_0}\) cannot be invertible. Therefore, $m \neq 1$ by Proposition \ref{propn1}. Therefore, suppose that $2 \leq m < \infty$ in \eqref{laurent1}. Collecting the coefficients of \((z-z_0)^{-m}\), \((z-z_0)^{-m+1}\)  and \((z-z_0)^{-m+2}\) in \eqref{idexp2} and \eqref{idexp3}, we obtain 
\begin{align} 
	&\NZ{-m} \AZ{0} = \AZ{0}\NZ{-m}  = 0, \label{i2eq1}\\
	&\NZ{-m} \AZ{1} + \NZ{-m+1} \AZ{0} = \AZ{1}\NZ{-m}    + \AZ{0} \NZ{-m+1} 
	= 0, \label{i2eq2} \\
	&\NZ{-m} \AZ{2}  + \NZ{-m+1}\AZ{1}  +  \NZ{-m+2} \AZ{0}  = 0. \label{i2eq3}
\end{align}  	
We may define the generalized inverse \((A_0)^g_{\{V_0, W_0\}}\) for \(V_0\) and \(W_0\).  
Composing both sides of  \eqref{i2eq1} with \((A_0)^g_{\{V_0, W_0\}}\), we obtain
\begin{align} \label{i2eq11}
	\NZ{-m}(\id_{\mathcal B}-\PP_{V_0}) = 0 \quad \text{and} \quad \NZ{-m} = \NZ{-m} \PP_{V_0}.
\end{align}
From  \eqref{i2eq2} and \eqref{i2eq11}, it is deduced that	
\begin{equation} \label{i2eq4}
	\NZ{-m} \AZ{1}{\mid_{\KK}} = \NZ{-m} \PP_{V_0} \AZ{1}\mid_{\KK} =	N_{-m} S_{V_0} = 0.
\end{equation}
Restricting the domain of the both sides of \eqref{i2eq3} to $\KKK$, we find that
\begin{equation} \label{i2eq6}
	\NZ{-m}  \AZ{2}{\mid_{\KKK}} + \NZ{-m+1} \AZ{1}{\mid_{\KKK}}  = 0.
\end{equation}
Moreover \eqref{i2eq2} trivially implies that  
\begin{equation}
	\NZ{-m+1} \AZ{0} = - \NZ{-m} \AZ{1} \:\:\: \text{and}\:\:\:   \AZ{0}\NZ{-m+1} = - \AZ{1}\NZ{-m}.   \label{i2eqq001}
\end{equation}
By composing each of \eqref{i2eqq001} with $(A_0)^g_{\{V_0, W_0\}}$,  it can be deduced that 
\begin{align}
	&\NZ{-m+1}(\id_{\mathcal B}-\PP_{V_0}) = - \NZ{-m} \AZ{1} (A_0)^g_{\{V_0, W_0\}},   \label{nmeq1}\\
	&\PP_{W_0}\NZ{-m+1} 	= - (A_0)^g_{\{V_0, W_0\}}  \AZ{1} \NZ{-m}.   \label{n1partial1}
\end{align} 
Composing  both sides of \eqref{n1partial1} with  \(\PP_{V_0}\) and using \eqref{i2eq11}, we find  that   
\begin{equation} \label{i2pfeq01}
	\PP_{W_0}\NZ{-m+1}  \PP_{V_0} = -(A_0)^g_{\{V_0, W_0\}} \AZ{1} \NZ{-m}.
\end{equation}
From \eqref{i2pfeq01} and the identity decomposition \(\id_{\mathcal B} = (\id_{\mathcal B}-\PP_{W_0}) + \PP_{W_0}\), we obtain
\begin{equation} \label{nmeq2}
	\NZ{-m+1} \PP_{V_0} = -(A_0)^g_{\{V_0, W_0\}} \AZ{1} \NZ{-m}  +   (\id_{\mathcal B} - \PP_{W_0})\NZ{-m+1}  \PP_{V_0}.
\end{equation}
Summing both sides of \eqref{nmeq1} and \eqref{nmeq2} gives 
\begin{align} 
	\NZ{-m+1} =  &-\NZ{-m} \AZ{1}  (A_0)^g_{\{V_0, W_0\}}      - (A_0)^g_{\{V_0, W_0\}}  \AZ{1} \NZ{-m} + (\id_{\mathcal B} - \PP_{W_0})\NZ{-m+1}  \PP_{V_0}. \label{i2eq7}
\end{align}
Therefore, \eqref{i2eq6} and \eqref{i2eq7} together imply that
\begin{align}
	0 \,= \,& \NZ{-m}  \AZ{2}{\mid_{\KKK}} - \NZ{-m} \AZ{1} (A_0)^g_{\{V_0, W_0\}}\AZ{1}{\mid_{\KKK}}  -(A_0)^g_{\{V_0, W_0\}}  \AZ{1} \NZ{-m}\AZ{1}{\mid_{\KKK}} \notag \\  &+  (\id_{\mathcal B} - \PP_{W_0})\NZ{-m+1}  \PP_{V_0} \AZ{1}{\mid_{\KKK}}. \label{i2eq8}
\end{align} 
From the definition of \(\KKK\), \( \PP_{V_0} \AZ{1}{\mid_{\KKK}} = 0\). Therefore, the last term  in \eqref{i2eq8} is zero . Moreover in view of \eqref{i2eq11}, we have $\NZ{-m}(\id_{\mathcal B}-\PP_{V_0}) = 0$. This implies that the third term in \eqref{i2eq8} is zero and \eqref{i2eq8} reduces to 
\begin{align} \label{i2eq9}
	\NZ{-m}  A^\dagger_{2\,\{V_0, W_0\}}{\mid_{\KKK}} = 0. 
\end{align}  
Given our direct sum condition (ii) (or equivalently \eqref{directsum22}) with equations \eqref{i2eq11}, \eqref{i2eq4}  and \eqref{i2eq9}, we conclude that \(\NZ{-m} = 0\). The above arguments hold for any arbitrary choice of \(m\) such that \(2<m<\infty\), and we already showed that \(m=1\) is impossible. Therefore, $m$ must be 2. This proves (ii)\( \Rightarrow \)(i). 

Now we show that (i)\(\Rightarrow\)(iii). We let \(V_0\), \(W_0\), and \(V_1 (\subset V_0)\) be choices of \(\RC, \KC\) and \(\RCC\) respectively. Suppose that  \(S^\dagger_{\{V_0, W_0,V_1\}} \) is not invertible. Due to Lemma \ref{lem5}, we know \(\dim(V_1) = \dim(\KKK)\), meaning that \(S^\dag_{\{V_0, W_0,V_1\}}\) is not injective. Therefore, we know there exists an element \(x \in \KKK\) such that \(S^\dag_{\{V_0, W_0,V_1\}} x = 0\). Collecting the coefficients of \((z-z_0)^{-1}\), \((z-z_0)^{0}\)  and \((z-z_0)^{0}\) in \eqref{idexp2} and \eqref{idexp3},  we  have
\begin{align} 
	&\sum_{k=-m}^{-3} \NZ{k}\AZ{-2-k} + \NZ{-2} \AZ{0} = 0, \label{i2eq0000}\\
	&\sum_{k=-m}^{-3} \NZ{k}\AZ{-1-k} + \NZ{-2} \AZ{1} + \NZ{-1}\AZ{0} = 0, \label{i2eq1111}\\
	&\sum_{k=-m}^{-3} \NZ{k}\AZ{-k} + \NZ{-2} \AZ{2} + \NZ{-1}\AZ{1} +  \NZ{0}\AZ{0}= \id_{\mathcal B}. \label{i2eq2222} 
\end{align} 
From \eqref{iddecomp}, \(N_{-2}\) can be written as the sum of  \(\NZ{-2}(\id_{\mathcal B}- \PP_{V_0})\), \(\NZ{-2}(\id_{\mathcal B} - \PP_{V_1}) \PP_{V_0}\) and \(\NZ{-2}\PP_{V_1}\). We will obtain an explicit formula for each summand. It is deduced from \eqref{i2eq0000} that 
\begin{align}
	\NZ{-2} (\id_{\mathcal B}- \PP_{V_0}) = - \sum_{k=-m}^{-3} \NZ{k}\AZ{-2-k}(A_0)^g_{\{V_0, W_0\}}. \label{i2eq4444}
\end{align}
Restricting both sides of \(\eqref{i2eq1111}\) to \(\KK\), we obtain
\begin{align}
	\NZ{-2} \AZ{1}{\mid_{\KK}}  = -  \sum_{k=-m}^{-3} \NZ{k}\AZ{-1-k}{\mid_{\KK}}. \label{i2eq5555}
\end{align} 
Since \(\NZ{-2} = \NZ{-2}(\id_{\mathcal B} - \PP_{V_0}) + \PP_{V_0}\), we obtain from \eqref{i2eq4444} and \eqref{i2eq5555},
\begin{align}
	\NZ{-2} S_{V_0} = -  \sum_{k=-m}^{-3} \NZ{k}\AZ{-1-k}{\mid_{\KK}} + \sum_{k=-m}^{-3} \NZ{k}\AZ{-2-k}(A_0)^g_{\{V_0, W_0\}} A_1{\mid_{\KK}}. \label{i2eq6666}
\end{align}  
We may define $\SSG$ as in \eqref{sgeneral}. Composing both sides of \eqref{i2eq6666} with  $\SSG \PP_V $, then we obtain
\begin{align}
	\NZ{-2}(\id_{\mathcal B} - \PP_{V_1}) \PP_{V_0}  = &-  \sum_{k=-m}^{-3} \NZ{k}\AZ{-1-k}\SSG \PP_{V_0} \notag \\ &+ \sum_{k=-m}^{-3} \NZ{k}\AZ{-2-k}(\AZ{0})^g_{\{V_0,W_0\}} A_1\SSG \PP_{V_0}.  \label{i2eq7777}
\end{align}  
Restricting both sides of \(\eqref{i2eq2222}\) to \(\KKK\), we have
\begin{align}
	&\sum_{k=-m}^{-3} \NZ{k}\AZ{-k}{\mid_{\KKK}} + \NZ{-2} \AZ{2}{\mid_{\KKK}} +  \NZ{-1}\AZ{1}{\mid_{\KKK}} = \id_{\mathcal B}{\mid_{\KKK}}. \label{i2eq22222} 
\end{align}	
From \eqref{i2eq1111}, we can also obtain
\begin{align} 
	&\NZ{-1}(\id_{\mathcal B}-\PP_{V_0}) = -\sum_{k=-m}^{-3} \NZ{k}\AZ{-1-k}(A_0)^g_{\{V_0, W_0\}} -\NZ{-2} \AZ{1} (A_0)^g_{\{V_0, W_0\}}. \label{i2eq3333}
\end{align} 
Since $\AZ{1}\KKK \subset \RR$, we have $ \NZ{-1}\AZ{1}{\mid_{\KKK}} =  \NZ{-1}(\id_{\mathcal B}-\PP_{V_0})\AZ{1}{\mid_{\KKK}}$. Substituting \eqref{i2eq3333} into  \eqref{i2eq22222}, the following can be obtained:
\begin{align*}
	\sum_{k=-m}^{-3} \NZ{k}\AZ{-k}{\mid_{\KKK}} -\sum_{k=-m}^{-3} \NZ{k}\AZ{-1-k}(A_0)^g_{\{V_0, W_0\}}\AZ{1}{\mid_{\KKK}} + \NZ{-2}A^\dagger_{2\,\{V_0, W_0\}}{\mid_{\KKK}} = \id_{\mathcal B}{\mid_{\KKK}}. 
\end{align*}
Since \(\NZ{-2} = \NZ{-2}(\id_{\mathcal B}-\PP_{V_0}) + \NZ{-2}(\id_{\mathcal B}-\PP_{V_1})\PP_{V_0} + \NZ{-2}\PP_{V_1} \), we have
\begin{align}
	\id_{\mathcal B}{\mid_{\KKK}}  =& \sum_{k=-m}^{-3} \NZ{k}\AZ{-k}{\mid_{\KKK}} -\sum_{k=-m}^{-3} \NZ{k}\AZ{-1-k}(A_0)^g_{\{V_0, W_0\}}\AZ{1}{\mid_{\KKK}} \notag \\ &+\NZ{-2}(\id_{\mathcal B}-\PP_{V_0})A^\dagger_{2\,\{V_0, W_0\}}{\mid_{\KKK}}  + \NZ{-2}(\id_{\mathcal B}-\PP_{V_1})\PP_{V_0}A^\dagger_{2\,\{V_0, W_0\}}{\mid_{\KKK}} +\NZ{-2}S^\dag_{\{V_0, W_0,V_1\}}. \label{i2eq11111}
\end{align}
Note that if \(\NZ{j}\) is zero for every \(j \leq -3\), the first four terms of the right hand side of \eqref{i2eq11111} are equal to zero, which can be easily deduced from the obtained formulas  for $ \NZ{-2}(\id_{\mathcal B}-\PP_{V_0})$ and \(\NZ{-2}(\id_{\mathcal B}-\PP_{V_1})\PP_{V_0}\) in \eqref{i2eq4444} and \eqref{i2eq7777}. However, we showed that there exists some \(x \in \KKK\) such that \(S^\dag_{\{V_0, W_0,V_1\}}x = 0\), which implies that \(\NZ{j}\) for some \(j \leq -3\) must not be zero. This shows (i)\(\Rightarrow\)(iii).

It remains to show (iv) $\Rightarrow$ (ii). Suppose that (ii) does not hold. Then for any arbitrary choice of $\RC$ and $\KC$, we must have either of
\begin{align}
	\RRR \cap A^\dagger_{2\,\{\RC, \KC\}}\KKK \neq \{0\}  \label{eqadd01}
\end{align}
or 
\begin{align}
	\RRR + A^\dagger_{2\,\{\RC, \KC\}}\KKK \neq \mathcal B. \label{eqadd02}
\end{align}
If \eqref{eqadd01} is true, then clearly  \(S^\dag_{\{\RC, \KC,\RCC\}}\) cannot be injective for any arbitrary choice of $\RCC$ satisfying \eqref{choice}.  Moreover if \eqref{eqadd02} is true, then we must have $\dim( A^\dagger_{2\,\{\RC, \KC\}}\KKK) < \dim(\RCC)$. This implies that \(S^\dag_{\{\RC, \KC,\RCC\}}\) cannot be surjective for any arbitrary choice of $\RCC$ satisfying \eqref{choice}. Therefore  (iv) $\Rightarrow$ (ii) is easily deduced.
\\   
\newline	
\textbf{Formulas for \(\NZ{-2}\) and \(\NZ{-1}\)} : We let \(V_0\), \(W_0\), \(V_1 (\subset V_0)\), \(W_1\) our  choice of \(\RC, \KC\), \(\RCC\) and \(\KCC\), respectively. Collecting the coefficients of \((z-z_0)^{-2}, (z-z_0)^{-1}\) and \((z-z_0)^{0}\) from \eqref{idexp2} and \eqref{idexp3}, we have
\begin{align*} 
	&\NZ{-2} \AZ{0} = \AZ{0}\NZ{-2}  = 0,\\
	&\NZ{-2} \AZ{1} + \NZ{-1} \AZ{0} = \AZ{1}\NZ{-2}    + \AZ{0} \NZ{-1} 
	= 0, \\
	&\NZ{-2} \AZ{2}  + \NZ{-1}\AZ{1}  +  \NZ{0} \AZ{0}  = 0.
\end{align*}  	
From similar arguments and algebra to those in our demonstration of (ii)$\Rightarrow$(i), it can easily be deduced that
\begin{align}
	&\NZ{-2} \RRR = \{0\}, \label{i2eqq4}\\
	&\NZ{-2} A^\dag_{2\,\{V_0,W_0\}}{\mid_{\KKK}} = \id_{\mathcal B}{\mid_{\KKK}}. \label{i2eqq5}
\end{align}
Equation \eqref{i2eqq4} implies that 
\begin{align}
	\NZ{-2} (\id_{\mathcal B}-\PP_{V_1}) = 0  \quad \text{and} \quad \NZ{-2} = \NZ{-2} \PP_{V_1}.\label{i2eqq55}
\end{align}
Equations \eqref{i2eqq4} and \eqref{i2eqq5} together imply that 
\begin{align}
	\NZ{-2}{\mid_{V_1}}S^\dagger_{\{V_0,W_0,V_1\}} = \id_{\mathcal B}{\mid_{\KKK}}. \label{i2eqq6}
\end{align}
Composing both sides of \eqref{i2eqq6} with \((S^\dagger_{\{V_0,W_0,V_1\}})^{-1} \PP_{V_1}\), we obtain 
\begin{align} \label{i2eqq7}
	\NZ{-2}\PP_{V_1} = (S^\dagger_{\{V_0,W_0,V_1\}})^{-1} \PP_{V_1}.
\end{align} 
In view of \eqref{i2eqq55}, \eqref{i2eqq7} is in fact equal to \(N_{-2}\) with the codomain restricted to \(\mathcal K_1\). Viewing this as a map from \(\mathcal B\) to \(\mathcal B\), we obtain \eqref{n2claim} for our choice of complementary subspaces. 

We next verify the claimed formula for \(\NZ{-1}\). In view of the identity decomposition \eqref{iddecomp}, $\NZ{-1}$ may be written as the sum of \(\NZ{-1}(\id_{\mathcal B} - \PP_{V_0})\), \(\NZ{-1}(\id_{\mathcal B} - \PP_{V_1})\PP_{V_0}\) and \(\NZ{-1}\PP_{V_1}\). We will find an explicit formula for each summand. From \eqref{idexp2} when \(m=2\), we obtain the coefficients of  $(z-z_0)^{-1}$, $(z-z_0)^{0}$ and $(z-z_0)^{1}$  as follows.
\begin{align} 
	&\NZ{-2} \AZ{1} + \NZ{-1} \AZ{0} 
	= 0, \label{i2n12} \\
	&\NZ{-2} \AZ{2}  + \NZ{-1}\AZ{1}  +  \NZ{0} \AZ{0}  = \id_{\mathcal B}, \label{i2n13}\\
	&\NZ{-2} \AZ{3} + \NZ{-1} \AZ{2} + \NZ{0} \AZ{1} + \NZ{1} \AZ{0} = 0. \label{i2n14}
\end{align} 
From (\ref{i2n12}) and the properties of the generalized inverse, it is easily deduced that
\begin{equation} \label{i2n15}
	\NZ{-1}  (\id_\mathcal B-\PP_{V_0}) = - \NZ{-2}\AZ{1} (\AZ{0})^g_{\{V_0,W_0\}}. 
\end{equation}
Restricting the domain of the both sides of \eqref{i2n13} to \( \KK\), we obtain 
\begin{equation}\label{i2n155}
	\NZ{-1}\AZ{1}{\mid_{ \KK}} = \id_{\mathcal B}{\mid_{ \KK}} - \NZ{-2} \AZ{2}{\mid_{ \KK}}. 
\end{equation}
Using the identity decomposition $\id_{\mathcal B} = \PP_{V_0}+(\id_{\mathcal B}-\PP_{V_0})$, \eqref{i2n155} can be written as
\begin{align}
	\NZ{-1}S_{V_0}  = \id_{\mathcal B}{\mid_{\KK}} - \NZ{-2} \AZ{2}{\mid_{\KK}}  -  \NZ{-1} &(\id_{\mathcal B}-\PP_{V_0}) \AZ{1}{\mid_{\KK}}. \label{i2n1555}
\end{align}
Substituting \eqref{i2n15} into \eqref{i2n1555}, we obtain
\begin{align} \label{i2n133}
	&\NZ{-1}S_{V_0} = \left(\id_{\mathcal B} - \NZ{-2} A^\dagger_{2\,\{V_0,W_0\}}\right){\mid_{\KK}}. 
\end{align}
Under our direct sum condition (ii), $S_{V_0}: \KK \to V_0$ is not invertible, but allows a generalized sense as in \eqref{sgeneral}. From the construction of \(\SSG\), we have  \(S_{V_0} \SSG = (\id_\mathcal B-\PP_{V_1}){\mid_{V_0}}\). 	Composing both sides of \eqref{i2n133} with \(\SSG\PP_{V_0}\), we obtain
\begin{align}\label{i2n151}
	\NZ{-1} (\id_\mathcal B-\PP_{V_1})\PP_{V_0} = \left(\id_{\mathcal B} - \NZ{-2} A^\dagger_{2\,\{V_0,W_0\}}\right)\SSG\PP_{V_0}. 
\end{align} 
Restricting the domain of both sides of \eqref{i2n14} to \(\KKK\), we have
\begin{equation} \label{i2n16}
	\NZ{-2} \AZ{3}{\mid_{\KKK}} + \NZ{-2} \AZ{2} + \NZ{0} \AZ{1}{\mid_{\KKK}} = 0. 
\end{equation}
Composing both sides of  \eqref{i2n13} with  $(\AZ{0})^g_{\{V_0,W_0\}}$, it is deduced that
\begin{equation} \label{i2n17}
	\NZ{0}(\id_{\mathcal B}-\PP_{V_0}) = \left(\id_{\mathcal B} - \NZ{-2}\AZ{2} - \NZ{-1} \AZ{1}\right) (\AZ{0})^g_{\{V_0,W_0\}}. 
\end{equation}
From the definition of \(\KKK\), we have $\AZ{1}{\KKK} \subset \RR$. Therefore, it is easily deduced that 
\begin{align}\label{i2n18}
	\NZ{0} \AZ{1}{\mid_{\KKK}} = \NZ{0}(\id_{\mathcal B}-\PP_{V_0})  \AZ{1}{\mid_{\KKK}}.
\end{align} 
Combining \eqref{i2n16}, \eqref{i2n17} and \eqref{i2n18}, we have
\begin{equation}\label{i2n19}
	\left(\NZ{-2}  \AZ{3} + \NZ{-1} \AZ{2} + (\id_{\mathcal B} - \NZ{-2} \AZ{2} - \NZ{-1}  \AZ{1})(\AZ{0})^g_{\{V_0,W_0\}} \AZ{1} \right){\mid_{\KKK}} = 0.
\end{equation}
Rearranging terms, \eqref{i2n19} reduces to  
\begin{align} \label{i222eqq1}		\NZ{-1}A^\dagger_{2\,\{V_0,W_0\}}{\mid_{\KKK}} = &-\NZ{-2}  \left(\AZ{3}  - \AZ{2} (\AZ{0})^g_{\{V_0,W_0\}} \AZ{1} \right){\mid_{\KKK}}  - (\AZ{0})^g_{\{V_0,W_0\}} \AZ{1}{\mid_{\KKK}}.
\end{align}
Moreover with a trivial algebra, it can be shown that \eqref{i222eqq1} is equal to 
\begin{align} \label{i222eqq2}
	\NZ{-1}A^\dagger_{2\,\{V_0,W_0\}}{\mid_{\KKK}} = &-\NZ{-2}  \left( A^\dagger_{3\,\{V_0,W_0\}} - A^\dagger_{2\,\{V_0,W_0\}}(\AZ{0})^g_{\{V_0,W_0\}}\AZ{1} \right){\mid_{\KKK}} - (\AZ{0})^g_{\{V_0,W_0\}} \AZ{1}{\mid_{\KKK}}.
\end{align}	
From the identity decomposition \eqref{iddecomp}, we have $\NZ{-1} = \NZ{-1}(\id_{\mathcal B}-\PP_{V_0}) + \NZ{-1} (\id_{\mathcal B}-\PP_{V_1})\PP_{V_0}  + \NZ{-1} \PP_{V_1}$, so  \eqref{i222eqq2} can be written as follows: 
\begin{align}
	\NZ{-1} S^\dagger_{\{V_0,W_0,V_1\}}  =  &-\NZ{-2}  \left( A^\dagger_{3\,\{V_0,W_0\}} - A^\dagger_{2\,\{V_0,W_0\}}(\AZ{0})^g_{\{V_0,W_0\}}\AZ{1} \right){\mid_{\KKK}} - (\AZ{0})^g_{\{V_0,W_0\}} \AZ{1}{\mid_{\KKK}} \notag \\ 
	&-\NZ{-1}(\id_{\mathcal B}-\PP_{V_0})A^\dagger_{2\,\{V_0,W_0\}}{\mid_{\KKK}} - \NZ{-1} (\id_{\mathcal B}-\PP_{V_1})\PP_{V_0}A^\dagger_{2\,\{V_0,W_0\}}{\mid_{\mathcal K_{(z_0)}}}. \label{i222eqq3} 
\end{align}
We obtained explicit formulas for \(\NZ{-1}(\id_{\mathcal B}-\PP_{V_0})\) and \(\NZ{-1} (\id_{\mathcal B}-\PP_{V_1})\PP_{V_0}\)  in \eqref{i2n15} and \eqref{i2n151}. Moreover, we proved that  \( S^\dagger_{\{V_0,W_0,V_1\}} : \KKK \to \RRR\) is invertible. After some tedious algebra from \eqref{i222eqq3}, one can obtain the claimed formula for \(N_{-1}\) \eqref{n1claim} for our choice of complementary subspaces. Of course, the resulting \(N_{-1}\) needs to be understood as a map from $\mathcal B$ to $\mathcal B$.\\
\newline	
\textbf{Formulas for \((\NZ{j}, j \geq 0)\)} : Collecting the coefficients of $(z-1)^{j}$, $(z-1)^{j+1}$ and $(z-1)^{j+2}$ in the expansion of the identity \eqref{idexp2} when \(m=2\), we have
\begin{align}
	&G_j(0,2)+ \NZ{j} \AZ{0} = \indj, \label{ideneqq1}\\
	&G_j(1,2)+  \NZ{j} \AZ{1} + \NZ{j+1} \AZ{0}  = 0,  \label{ideneqq2} \\
	&G_j(2,2)+  \NZ{j} \AZ{2} + \NZ{j+1} \AZ{1} + \NZ{j+2}\AZ{0}  = 0.\label{ideneqq3}
\end{align}
From the identity decomposition \eqref{iddecomp}, similarly the operator \(\NZ{j}\) can be written as the sum of \(\NZ{j}(\id_{\mathcal B}-\PP_{V_0}) \), \(\NZ{j}(\id_{\mathcal B}-\PP_{V_1})\PP_{V_0}\), and \(\NZ{j}\PP_{V_1}\). We will find an explicit formula for each summand.
First from \eqref{ideneqq1}, it can be easily verified that 
\begin{align} \label{raneqq1}
	\NZ{j}(\id_{\mathcal B} -\PP_{V_0}) = \indj(\AZ{0})^g - G_j(0,2) (\AZ{0})^g_{\{V_0,W_0\}}.
\end{align}
By restricting the domain of \eqref{ideneqq2} to \(\KK\), we obtain
\begin{align} \label{raneqq2}
	\NZ{j} \AZ{1}{\mid_{\KK}} = - G_j(1,2){\mid_{\KK}}.
\end{align}
Using the identity decomposition \(\id_{\mathcal B} = \PP_{V_0} + (\id_{\mathcal B} -\PP_{V_0})\) and \eqref{raneqq1}, we may rewrite  \eqref{raneqq2} as follows.
\begin{align}
	\NZ{j} S_{V_0} = &- G_j(1,2){\mid_{\KK}} - \indj(\AZ{0})^g_{\{V_0,W_0\}} \AZ{1}{\mid_{\KK}} + G_j(0,2) (\AZ{0})^g_{\{V_0,W_0\}}\AZ{1}{\mid_{\KK}}. \label{raneqq3}
\end{align} 
Composing both sides of \eqref{raneqq3} with  \( \SSG\PP_{V_0}\), an explicit formula for \(\NZ{j}(\id_{\mathcal B} - \PP_{V_1})\PP_{V_0}  \) can be obtained as follows:
\begin{align}
	\NZ{j}(\id_{\mathcal B} - \PP_{V_1})\PP_{V_0} =& - G_j(1,2)\SSG\PP_{V_0} - \indj(\AZ{0})^g_{\{V_0,W_0\}} \AZ{1}\SSG\PP_{V_0} \nonumber \\ &  + G_j(0,2) (\AZ{0})^g_{\{V_0,W_0\}} \AZ{1}\SSG\PP_{V_0}. \label{raneqq6}
\end{align}
Restricting the domain of \eqref{ideneqq3} to $\KKK$, we obtain 
\begin{align}\label{raneqq4}
	G_j(2,2){\mid_{\KKK}}
	+  \NZ{j} \AZ{2}{\mid_{\KKK}} + \NZ{j+1} \AZ{1}{\mid_{\KKK}} = 0.
\end{align}
Composing  both sides of \eqref{ideneqq2} with  \((\AZ{0})^g_{\{V_0,W_0\}}\), it is easily deduced that
\begin{align}
	\NZ{j+1} (\id_{\mathcal B}-\PP_{V_0})  = &- G_j(1,2)(\AZ{0})^g_{\{V_0,W_0\}} -   \NZ{j} \AZ{1}(\AZ{0})^g_{\{V_0,W_0\}}. \label{raneqq5}
\end{align}
Note that we have \(\NZ{j+1} \AZ{1}{\mid_{\KKK}} = \NZ{j+1}(\id_{\mathcal B}-\PP_{V_0})\AZ{1}{\mid_{\KKK}} \) from the definition of \(\KKK\).  Combining this with \eqref{raneqq4} and \eqref{raneqq5}, we obtain the following equation.
\begin{align}
	\NZ{j}A^{\dagger}_{2\,\{V_0,W_0\}}{\mid_{\KKK}}  &= - G_j(2,2){\mid_{\KKK}}
	+  G_j(1,2)(\AZ{0})^g_{\{V_0,W_0\}}\AZ{1}{\mid_{\KKK}}. \label{raneqq8}
\end{align}
We know \(\NZ{j} = \NZ{j}(\id_{\mathcal B}-\PP_{V_0}) + \NZ{j}(\id_{\mathcal B}-\PP_{V_1})\PP_{V_0} + \NZ{j}\PP_{V_1}\), and  already obtained explicit formulas for the last two term. Substituting the obtained formulas into \eqref{raneqq8}, we obtain
\begin{align}
	\NZ{j}\PP_{V_1}A^{\dagger}_{2\,\{V_0,W_0\}}{\mid_{\KKK}} &=- G_j(2,2){\mid_{\KKK}} +  G_j(1,2)(\AZ{0})^g_{\{V_0,W_0\}}\AZ{1}{\mid_{\KKK}}   - \indj(\AZ{0})^g_{\{V_0,W_0\}}A^{\dagger}_{2\,\{V_0,W_0\}}{\mid_{\KKK}}  \notag  \\ 
	&\,\quad + G_j(0,2,) (\AZ{0})^g_{\{V_0,W_0\}}A^{\dagger}_{2\,\{V_0,W_0\}}{\mid_{\KKK}}  \nonumber   +  G_j(1,2)\SSG \PP_{V_0}A^{\dagger}_{2\,\{V_0,W_0\}}{\mid_{\KKK}}  \nonumber \\ 
	&\,\quad + \indj(\AZ{0})^g  \AZ{1}\SSG\PP_{V_0}A^{\dagger}_{2\,\{V_0,W_0\}}{\mid_{\KKK}} \nonumber \\&\,\quad- G_j(0,2)(\AZ{0})^g_{\{V_0,W_0\}} \AZ{1}\SSG \PP_{V_0} A^{\dagger}_{2\,\{V_0,W_0\}}{\mid_{\KKK}}.  \label{raneqq7}
\end{align}
Composing  both sides of \eqref{raneqq7} with \((S^\dagger_{\{V_0, W_0, V_1\}})^{-1}\PP_{V_1}\), we obtain the formula for \(\NZ{j}\PP_{V_1}\). Combining this formula with \eqref{raneqq1} and \eqref{raneqq6}, one can verify the claimed formula \eqref{njclaim} for our choice of complementary subspaces after some algebra. 

Even though our recursive formula is obtained under a given choice of complementary subspaces \(V_0,W_0,V_1\) and \(W_1\), we know, due to the uniqueness of the Laurent series, that it does not depend on our choice of complementary subspaces.
\end{proof}

\begin{remark} \label{rem1}
Let us narrow down our discussion to $\mathcal H$, a complex separable Hilbert space.
In $\mathcal H$, there is a canonical notion of a complementary subspace, called the orthogonal complement, while we do not have such a notion in $\mathcal{B}$. We therefore may let $(\ran \AZ{0})^\perp$ (resp.\ $(\ker \AZ{0})^\perp$) be our choice of \(\RC\) (resp.\ $\KC$).  Then \(\mathrm{P}_{(\ran \AZ{0})^\perp}\) and  \(\mathrm{P}_{(\ker \AZ{0})^\perp}\) are orthogonal projections. Then our generalized inverse \((\AZ{0})^g_{\{(\ran \AZ{0})^\perp,(\ker \AZ{0})^\perp\}}\) has the following properties:
\begin{align*}
	&(\AZ{0})^g_{\{(\ran \AZ{0})^\perp,(\ker \AZ{0})^\perp\}}\AZ{0} = (\id_{\mathcal H}-\PP_{(\ran \AZ{0})^\perp}),\\ &\AZ{0}(\AZ{0})^g_{\{(\ran \AZ{0})^\perp,(\ker \AZ{0})^\perp\}} = \PP_{(\ker \AZ{0})^\perp}. 
\end{align*}  
That is, both of $(\AZ{0})^g_{\{(\ran \AZ{0})^\perp,(\ker \AZ{0})^\perp\}}\AZ{0}$ and $\AZ{0}(\AZ{0})^g_{\{(\ran \AZ{0})^\perp,(\ker \AZ{0})^\perp\}}$  are self-adjoint operators, meaning that  \((\AZ{0})^g_{\{(\ran \AZ{0})^\perp,(\ker \AZ{0})^\perp\}}\) is the Moore-Penrose inverse operator of \(A_0\) \citep[Section 1]{Engl1981}. 
Moreover, we may let $	(\ran \AZ{0})^\perp \cap (S_{(\ran \AZ{0})^\perp}\KK)^\perp$ be our choice of \(\RCC\). 
This choice trivially satisfies \eqref{choice}, and it allows the orthogonal decomposition of \(\mathcal H\) as follows:
\begin{align*}
	\mathcal H = \RR \oplus_\perp S_{(\ran \AZ{0})^\perp}\KK \oplus_\perp \RCC. 
\end{align*}
Letting \(\KKK^\perp \cap \KK\) be our choice of  \(\KCC\), we can also make a generalized inverse of \(S_{(\ran \AZ{0})^\perp}\) become the Moore-Penrose inverse operator. This specific choice of complementary subspaces appears to be standard in $\mathcal H$ among many other possible choices.  \end{remark} 

\begin{remark}
	Under the specific choice of complementary subspaces in Remark \ref{rem1}, \cite{BS2018} stated and proved similar theorems to our Propositions \ref{propn1} and \ref{propn2}, without providing a recursive formula for $\NZ{j}$. The reader is referred to Theorem 3.1 and 3.2 of their paper for more details. On the other hand, we explicitly take all other possible choices of complementary subspaces into account and provide a recursive formula to obtain a closed-form expression of the Laurent series.  
	Therefore even if we restrict our concern to a Hilbert space setting, our propositions can be viewed as extended versions of those in \cite{BS2018}.
\end{remark}


\section{Representation theory} \label{srep}
In this section, we derive a suitable extension of the Granger-Johansen representation theory, which will be given as an application of the results established in Section \ref{sholomorphic}. Even if there are a few versions of this theorem developed in a possibly infinite dimensional Hilbert/Banach space (see e.g., \citealp{BSS2017,BS2018,Franchi2017b,albrecht2021resolution,seo_2022}), ours seems to be the first that can provide a full charaterization of I(1) and I(2) solutions (except a term depending on initial values) of a possibly infinite order autoregressive law of motion in a Banach space.

Let $A:\mathbb{C} \to \mathcal L_{\mathcal B}$ be a holomorphic operator pencil, then it allows the following Taylor series.
\begin{align*}
A(z) = \sum_{j=0}^\infty \AZ{j,(0)}z^j,
\end{align*}
where \( A_{j,(0)}\) denotes the coefficient of $z^j$ in the Taylor series of \(A(z)\) around \(0\). Note that we use additional subscript \((0)\) to distinguish it from \(A_j\) which denotes the coefficient of \((z-1)^j\) in the Taylor series of \(A(z)\) around \(1\). As in the previous sections,  we let \(N(z)\) denote  \(A(z)^{-1}\) if it exists.

Let $D_r \subset \mathbb{C}$ denote the open disk centered at the origin with radius $r > 0$ and $\overline{D}_{r}$ be its closure. Throughout this section, we employ the following assumption:
\begin{assumption}\label{assu2} \hspace{0.1cm}
\begin{itemize}
	\item[(i)]  $A:\mathbb{C} \to \mathcal L_{\mathcal B}$ is a holomorphic Fredholm pencil.
	\item[(ii)] $A(z)$ is invertible on $\overline{D}_{1}\setminus\{1\}$. 
\end{itemize} 
\end{assumption}

Now we provide one of the main results of this section.	 To simplify expressions in the following propositions, we keep using the notations introduced in Section \ref{sholomorphic}. Moreover, we introduce $\pi_j(k)$ for \(j \geq 0\), which is given by 
\begin{align*}
\pi_0(k) = 1, \quad \pi_1(k) = k, \quad \pi_j(k) = k(k-1)\cdots(k-j+1), \quad j \geq 2.
\end{align*}

\begin{proposition} \label{grt}
Suppose that \(A(z)\) satisfies Assumption \ref{assu2} and we have a sequence \((X_t, t \geq -p+1)\) satisfying 
\begin{align}
	\sum_{j=0}^\infty \AZ{j,(0)} X_{t-j} = \varepsilon_t, \label{oarlaw}
\end{align} where \(\varepsilon = (\varepsilon_t, t \in \mathbb{Z})\) is a strong white noise. Then the following conditions are equivalent to each other. 
\begin{itemize}
	\item[$\mathrm{(i)}$]  $A(z)^{-1}$ has a simple pole at $z=1$. 
	\item[$\mathrm{(ii)}$]  $\mathcal B = \RR \oplus \AZ{1} \KK$. 
	\item[$\mathrm{(iii)}$] For any choice of $\RC$ ,  $\SZ{1} : \KK \to \RC$ is invertible. 
	\item[$\mathrm{(iv)}$] For some choice of $\RC$ ,  $\SZ{1} : \KK \to \RC$ is invertible. 
\end{itemize}
Under any of these equivalent conditions, $X_t$ allows the representation: for some \(\tau_0\) depending on initial values, 
\begin{equation} \label{grteq}
	X_t = \tau_0 -\NO{-1} \sum_{s=1}^t \varepsilon_s  + \nu_t, \quad t \geq 0.
\end{equation} 
Moreover, \(\nu_t \in L^2_{\mathcal B}\) and satisfies 
\begin{align}\label{grteq2}
	&\nu_t = \sum_{j=0}^\infty \Phi_j \varepsilon_{t-j}, \quad \Phi_j = \sum_{k=j}^\infty(-1)^{k-j} \pi_j(k) \NO{k},	
\end{align}
where $(\NO{j}, j \geq -1)$ can be explicitly obtained from Proposition \ref{propn1}. 	
\end{proposition}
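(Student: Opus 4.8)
The equivalence of conditions (i)--(iv) is essentially a restatement of Proposition \ref{propn1} at the distinguished point $z_0=1$. The plan is first to check that Assumption \ref{assu2} implies Assumption \ref{assumelaurent} there: since $A$ is an entire Fredholm pencil that is invertible on $\overline{D}_1\setminus\{1\}$, the point $1$ is an isolated element of $\sigma(A)$, and invertibility somewhere forces $A$ to be an $\FF_0$-pencil by Lemma \ref{lem0}. Reading $\ran A_0$, $\ker A_0$, $S_1$ and the coefficients $(N_j, j\geq -1)$ at $z_0=1$, Proposition \ref{propn1} immediately delivers (i)$\Leftrightarrow$(ii)$\Leftrightarrow$(iii)$\Leftrightarrow$(iv) together with the recursive formula for the $N_j$; no additional argument is needed for this half.

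For the representation I would separate the principal and holomorphic parts of the inverse. Under condition (i) write
\[
A(z)^{-1} = \frac{N_{-1}}{z-1} + M(z), \qquad M(z) := \sum_{j\geq 0} N_j (z-1)^j .
\]
The first substantive point is a regularity claim: because $A(z)$ is invertible on $\overline{D}_1\setminus\{1\}$ and its inverse is finitely meromorphic (analytic Fredholm theorem), the only singularity of $A(z)^{-1}$ in a neighborhood of $\overline{D}_1$ is the simple pole at $1$, and any other element of $\sigma(A)$ has modulus strictly greater than $1$. Hence $M$ is holomorphic on an open disk about the origin of radius $\rho>1$. Expanding $M$ about $0$ and matching with its expansion about $1$ produces the coefficients $\Phi_j$ of \eqref{grteq2}, the weights $\pi_j(k)$ arising from the $j$-th derivatives of $(z-1)^k$; holomorphy on a disk of radius $\rho>1$ gives geometric decay of the Taylor coefficients and hence $\sum_j \|\Phi_j\|_{\mathcal L_{\mathcal B}} < \infty$. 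Consequently $\nu_t = M(L)\varepsilon_t = \sum_{j\geq 0}\Phi_j \varepsilon_{t-j}$ is a standard linear process and lies in $L^2_{\mathcal B}$, which establishes \eqref{grteq2}.

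Next I would pass to the time domain, reading $z$ as the backshift operator $L$ and $z-1$ as $-(1-L)=-\Delta$. The holomorphic part contributes the stationary term $\nu_t$, while the principal part $N_{-1}/(z-1)$ corresponds to $-N_{-1}$ times the anti-difference (summation) operator, that is, the I(1) term $-N_{-1}\sum_{s=1}^t \varepsilon_s$ in \eqref{grteq}; note that $\ran N_{-1}\subseteq \ker A_0$, so this random-walk component is confined to $\ker A_0$, as expected for a common-trends representation. That $\nu_t - N_{-1}\sum_{s=1}^t\varepsilon_s$ solves \eqref{oarlaw} rests on two identities read off from $A(z)A(z)^{-1}=\id_{\mathcal B}$: first $A_0 N_{-1}=0$ (coefficient of $(z-1)^{-1}$), which makes $A(z)N_{-1}/(z-1)=\sum_{k\geq0}A_{k+1}N_{-1}(z-1)^k$ holomorphic at $1$; and second $A(z)M(z)=\id_{\mathcal B}-A(z)N_{-1}/(z-1)$. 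Applying $A(L)$ to the two pieces and using $\Delta\sum_{s=1}^t\varepsilon_s=\varepsilon_t$, the non-summable contributions from the two terms cancel and exactly $\varepsilon_t$ survives.

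The step I expect to be the real obstacle is the bookkeeping of initial values, namely the precise meaning and existence of $\tau_0$. Since the infinite-order operator $A(L)$ requires the values $X_s$ below the initial time, and the truncated sum $\sum_{s=1}^t$ inverts $\Delta$ only up to terms fixed by those initial values, the given solution of \eqref{oarlaw} and the constructed process $-N_{-1}\sum_{s=1}^t\varepsilon_s+\nu_t$ differ by a solution of the homogeneous law $A(L)(\cdot)=0$. I would then show that for $t\geq 0$ this difference is constant in $t$ and determined by $(X_0,X_{-1},\dots)$, and set $\tau_0$ equal to it; the fact that the nonstationary directions lie in $\ker A_0$ is what forces the surviving homogeneous part to be a single initial-value-dependent element rather than a growing sequence. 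Making this homogeneous-solution argument rigorous in the Banach-space, infinite-lag setting, and justifying the interchange of the absolutely convergent series defining $A(L)M(L)$ with the summation operator, is the technical heart of the proof.
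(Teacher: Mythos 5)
Your proposal is correct and, in its main ingredients, follows the same route as the paper: reduce the equivalence of (i)--(iv) to Proposition \ref{propn1} at $z_0=1$, show that the regular part of $A(z)^{-1}$ extends holomorphically to a disk $D_{1+\eta}$ of radius strictly greater than one, use the resulting geometric decay of its Taylor coefficients at the origin to conclude $\nu_t\in L^2_{\mathcal B}$ and the formula \eqref{grteq2}, and read the principal part as $-N_{-1}$ times a truncated summation operator.

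The one place you genuinely diverge is the final step, and it is where your version creates avoidable work. You verify that $X_t^*=-N_{-1}\sum_{s=1}^t\varepsilon_s+\nu_t$ satisfies \eqref{oarlaw} and then must argue that $X_t-X_t^*$, as a solution of the homogeneous law $A(L)(\cdot)=0$, is constant in $t$; the justification you sketch for that constancy (that the nonstationary directions lie in $\ker A_0$) is not the operative mechanism, and classifying homogeneous solutions of an infinite-order law in Banach space is exactly the kind of statement that needs its own proof. The paper orders the argument the other way, working with necessity rather than sufficiency: it applies the filter induced by $(1-z)N(z)$ (holomorphic on $D_{1+\eta}$ once the removable singularity at $1$ is filled, see \eqref{propeq1}) to both sides of \eqref{oarlaw}, so that the \emph{given} solution satisfies $\Delta X_t=-N_{-1}\varepsilon_t+\nu_t-\nu_{t-1}$; summing over $s=1,\dots,t$ yields \eqref{grteq} with $\tau_0=X_0-\nu_0$ immediately, and the only homogeneous equation ever involved is $\Delta Y_t=0$, whose solutions are trivially constants. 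If you adopt that ordering, the sole filter manipulation to justify is the composition $(1-L)N(L)A(L)=\Delta$, and your verification of $A(L)X_t^*=\varepsilon_t$ and the homogeneous-solution analysis become unnecessary. You are right, however, that the interchange-of-summation issues in composing infinite-order filters are the substantive analytic content here; the paper glosses over them as well.
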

\begin{proof}  
\noindent Under Assumption \ref{assu2}, there exists \(\eta>0\) such that  \(A(z)^{-1}\) depends holomorphically on \(z \in D_{1+\eta} \setminus\{1\}\). To see this, note that the analytic Fredholm theorem implies that \(\sigma(A)\) is a discrete set. Since \(\sigma(A)\) is closed, it is deduced that \(\sigma(A) \cap \overline{D}_{1+r}\) is a closed discrete subset of \(\overline{D}_{1+r}\)  for some \(0<r<\infty\). The fact that $\overline{D}_{1+r}$ is a compact subset of \(\mathbb{C}\) implies that there are only finitely many elements in \(\sigma(A) \cap \overline{D}_{1+r}\). Furthermore since \(1\) is an isolated element of \(\sigma(A)\),  it can be easily deduced that there exists \(\eta \in (0,r)\) such that \(A(z)^{-1}\) depends holomorphically on \(z \in D_{1+\eta} \setminus\{1\}\).
Since \(1 \in \sigma(A)\) is an isolated element, the equivalence of conditions (i)-(iv) is implied by Proposition \ref{propn1}. 

Under any of the equivalent conditions, it is deduced from Proposition \ref{propn1} that  $N(z) = \NO{-1}(z-1)^{-1} + N^H(z)$, where $N^H(z)$ denotes the holomorphic part of the Laurent series. Moreover, we can explicitly obtain the coefficients \(( \NO{j}, j \geq -1)\) using the  recursive formula provided in Proposition \ref{propn1}. It is clear that \((1-z)N(z)\) can be holomorphically extended over $1$, and we can rewrite it as
\begin{align}\label{propeq1}
	(1-z) N(z)^{-1} = -\NO{-1}  + (1-z) N^H(z).
\end{align}
Applying the linear filter induced by \eqref{propeq1} to both sides of \eqref{oarlaw}, we obtain
\begin{align*} 
	\Delta X_t := X_t - X_{t-1} = -\NO{-1} \varepsilon_t + (\nu_t - \nu_{t-1}),
\end{align*}
where $\nu_s = \sum_{j=0}^\infty N_{j,(0)}^H   \varepsilon_{s-j}$, and \(N_{j,(0)}^H\) denotes the coefficient of $z^j$ in the Taylor series of \(N^H(z)\) around \(0\). Clearly the process
\begin{align*}
	X_t^* = -\NO{-1} \sum_{s=1}^t \varepsilon_s  + \nu_t
\end{align*}
is a solution, and the complete solution is obtained by adding the solution to $\Delta^2 X_t = 0$, which is given by \(\tau_0\). We then show $\nu_s$ is convergent in $L^2_H$. Note that    
\begin{align} 
	\left \| \sum_{j=0}^\infty N_{j,(0)}^H \varepsilon_{s-j} \right \|  \leq  \sum_{j=0}^\infty  \|N_{j,(0)}^H\|_{\mathcal L_{\mathcal B}} \|\varepsilon_{s-j}\| \leq C  \sum_{j=0}^\infty  \|N_{j,(0)}^H\|_{\mathcal L_{\mathcal B}}, 
	\label{normsum}\end{align} 
where \(C\) is some positive constant. The fact that $N^H(z)$ is holomorphic on $D_{1+\eta}$ implies that $\|N_{j,(0)}^H\|$ exponentially decreases as $j$ goes to infinity. This shows that the right-hand side of \eqref{normsum} converges to a finite quantity, so \(\nu_s\) converges in \(L^2_H\).

It is easy to verify \eqref{grteq2} from an elementary calculus. 
\end{proof}

\begin{remark}
Given that \(\varepsilon_t\) is a strong white noise, the sequence \((\nu_t, t \in \mathbb{Z})\) in our representation \eqref{grteq} is a stationary sequence. Therefore,  \eqref{grteq} shows that \(X_t\) can be decomposed into three different components; a random walk, a stationary process and a term that depends on initial values. 
\end{remark}

\begin{proposition} \label{grti2}
Suppose that \(A(z)\) satisfies Assumption \ref{assu2} and we have a sequence \((X_t, t \geq -p+1)\) satisfying \eqref{oarlaw}. Then the following conditions are equivalent to each other. 
\begin{itemize}
	\item[$\mathrm{(i)}$] $A(z)^{-1}$ has a second order pole at \(z=1\).  
	\item[$\mathrm{(ii)}$]  For some choice of $\RC$, $\KC$, we have 
	\begin{align*}
		\mathcal B = \RRR \oplus A^{\dagger}_{2\,\{\RC, \KC\}} \KKK.
	\end{align*}
	\item[$\mathrm{(iii)}$]   For any choice of $\RC$, $\KC$, and $\RCC$ satisfying \eqref{choice},
	$S^\dag_{\{\RC, \KC, \RCC\}} : \KKK \to \RCC \text{ is invertible. } $
	\item[$\mathrm{(iv)}$]   For some choice of $\RC$, $\KC$, and $\RCC$ satisfying \eqref{choice},
	$S^\dag_{\{\RC, \KC, \RCC\}} : \KKK \to \RCC \text{ is invertible. } $
\end{itemize}
Under any of these equivalent conditions, $X_t$ allows the representation: for some \(\tau_0\) and \(\tau_1\) depending on initial values,
\begin{align} 
	X_t = & \tau_0 + \tau_1 t  + \NO{-2}\sum_{\tau=1}^{t}\sum_{s=1}^\tau \varepsilon_s - \NO{-1}\sum_{s=1}^t \varepsilon_t + \nu_t, \quad t \geq 0. \label{grti2eq}
\end{align}
Moreover, \(\nu_t \in L^2_{\mathcal B}\) and satisfies 
\begin{align}\label{grteq22}
	&\nu_t = \sum_{j=0}^\infty \Phi_j \varepsilon_{t-j}, \quad \Phi_j = \sum_{k=j}^\infty(-1)^{k-j} \pi_j(k) \NO{k},	
\end{align}
where $(\NO{j}, j \geq -2)$ can be explicitly obtained from Proposition \ref{propn2}. 	
\begin{proof}
	As we showed in Proposition \ref{grt}, we know there exists \(\eta>0\) such that  \(A(z)^{-1}\) depends holomorphically on \(z \in D_{1+\eta} \setminus\{1\}\). Due to Proposition \ref{propn2}, we know \(N(z) = \NO{-2}(z-1)^{-2} + \NO{-1}(z-1)^{-1} + N^H(z)\), where \(N^H(z)\) is the holomorphic part of the Laurent series.  
	
	\((1-z)^2A(z)^{-1}\) can be holomorphically extended over $1$ so that it  holomorphic on \(D_{1+\eta}\).  Then we have 
	\begin{align*}
		(1-z)^2 N(z)^{-1} = \NZ{-2} - \NZ{-1}(1-z) + (1-z)^2 N^{H}(z).
	\end{align*}
	Applying the linear filter induced by \((1-z)^2 A(z)^{-1}\) to both sides of \eqref{oarlaw}, we obtain for \(s=1,\ldots, t\)
	\begin{align*} 
		\Delta^2 X_t = \NO{-2} \varepsilon_t - \NO{-1} \Delta \varepsilon_t  + (\Delta \nu_t - \Delta \nu_{t-1}),
	\end{align*}
	where  $\nu_t := \sum_{j} \NC{j,(0)}^H \varepsilon_{t-j}$. From \eqref{normsum}, we know \(\nu_t\) converges in $L^2_{\mathcal B}$. Clearly the process 
	\begin{align*} 
		X_t^* =	\NO{-2}\sum_{\tau=1}^{t}\sum_{s=1}^\tau \varepsilon_s - \NO{-1}\sum_{s=1}^t \varepsilon_t + \nu_t 
	\end{align*}
	is a solution. Since the solution to \(\Delta^2 X_t = 0\) is given by \(\tau_0 + \tau_1 t\), we obtain \eqref{grti2eq}. It is also easy to verify \eqref{grteq22} from an elementary calculus.
\end{proof}
\end{proposition}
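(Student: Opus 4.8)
The equivalence of conditions (i)--(iv) needs no fresh argument. Under Assumption \ref{assu2} the pencil is a holomorphic Fredholm pencil and $1$ is an isolated element of $\sigma(A)$, so Assumption \ref{assumelaurent} holds with $z_0=1$ and Proposition \ref{propn2} yields exactly these four equivalent conditions; the nondegeneracy hypothesis $\KKK\neq\{0\}$ used there is automatic here, since $\KKK=\{0\}$ would render $\SZ{1}$ invertible and force a simple pole by Proposition \ref{propn1}, contradicting (i). The real content is the representation \eqref{grti2eq}, and my plan is to replay the argument of Proposition \ref{grt} with a second-order pole: (a) localise the spectrum so that $A(z)^{-1}$ is holomorphic on a punctured disk about $1$; (b) clear the pole by multiplying by $(1-z)^2$, obtaining a genuine absolutely summable operator filter; (c) apply that filter to the law of motion \eqref{oarlaw} to obtain an expression for $\Delta^2 X_t$; and (d) invert $\Delta^2$ by iterated partial summation, recovering the initial-value terms as the kernel of $\Delta^2$.

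\textbf{Localisation and pole extraction.} First I would reproduce the localisation step of Proposition \ref{grt}: discreteness of $\sigma(A)$ from the analytic Fredholm theorem, together with closedness of $\sigma(A)$ and compactness of $\overline{D}_{1+r}$, leaves only finitely many spectral points in $\overline{D}_{1+r}$, so isolatedness of $1$ furnishes $\eta>0$ with $A(z)^{-1}$ holomorphic on $D_{1+\eta}\setminus\{1\}$. Proposition \ref{propn2} then gives, on this punctured disk,
\begin{align*}
	N(z) = \NO{-2}(z-1)^{-2} + \NO{-1}(z-1)^{-1} + N^H(z),
\end{align*}
with $N^H$ holomorphic on $D_{1+\eta}$ and with $\NO{-2},\NO{-1}$ supplied explicitly by \eqref{n2claim}--\eqref{njclaim}. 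Multiplying through and using $(z-1)=-(1-z)$ produces the holomorphic extension
\begin{align*}
	(1-z)^2 N(z) = \NO{-2} - \NO{-1}(1-z) + (1-z)^2 N^H(z).
\end{align*}

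\textbf{Filtering and inversion.} Since $N(z)A(z)=\id_{\mathcal B}$ off the pole, the operator power-series identity $(1-z)^2N(z)A(z)=(1-z)^2\id_{\mathcal B}$ holds, so applying the filter $(1-L)^2N(L)$ to \eqref{oarlaw} collapses the left side to $\Delta^2 X_t$ and the right side to
\begin{align*}
	\Delta^2 X_t = \NO{-2}\varepsilon_t - \NO{-1}\Delta\varepsilon_t + \Delta^2\nu_t, \quad \nu_t := \sum_{j=0}^\infty N_{j,(0)}^H\,\varepsilon_{t-j}.
\end{align*}
Inverting $\Delta^2$ by applying partial summation twice sends $\NO{-2}\varepsilon_t$ to the double random walk $\NO{-2}\sum_{\tau=1}^t\sum_{s=1}^\tau\varepsilon_s$, sends $-\NO{-1}\Delta\varepsilon_t$ to $-\NO{-1}\sum_{s=1}^t\varepsilon_s$, and returns $\nu_t$; adding the general solution $\tau_0+\tau_1 t$ of $\Delta^2 X_t=0$ yields \eqref{grti2eq}. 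Convergence of $\nu_t$ in $L^2_{\mathcal B}$ is inherited verbatim from Proposition \ref{grt}: holomorphy of $N^H$ on $D_{1+\eta}$ forces $\|N_{j,(0)}^H\|_{\mathcal L_{\mathcal B}}$ to decay geometrically, so the bound \eqref{normsum} applies. Finally, \eqref{grteq22} follows by re-expanding the Taylor series of $N^H$ about $0$ in terms of the Laurent coefficients $\NO{k}$ about $1$, a routine binomial computation.

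\textbf{Main obstacle.} I expect the delicate point to be the filter algebra, not any individual estimate. One must justify that $(1-L)^2N(L)$ is a legitimate bounded linear filter acting term by term on the stochastic sequence---which rests on the geometric decay of the coefficients of $(1-z)^2N(z)$, valid precisely because this product is holomorphic on a disk of radius strictly larger than $1$---and that composing it with $A(L)$ reproduces $(1-L)^2$ coefficientwise. Once this algebra is secured, the iterated-summation bookkeeping and the identification of $\tau_0+\tau_1 t$ as $\ker\Delta^2$ are mechanical.
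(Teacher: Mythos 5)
Your proposal is correct and follows essentially the same route as the paper's proof: localise the spectrum to get holomorphy of $A(z)^{-1}$ on $D_{1+\eta}\setminus\{1\}$, invoke Proposition \ref{propn2} for the order-two Laurent expansion, multiply by $(1-z)^2$ to obtain an absolutely summable filter, apply it to \eqref{oarlaw} to get $\Delta^2 X_t = \NO{-2}\varepsilon_t - \NO{-1}\Delta\varepsilon_t + \Delta^2\nu_t$, and recover \eqref{grti2eq} by inverting $\Delta^2$ and adding its kernel $\tau_0+\tau_1 t$, with $L^2_{\mathcal B}$-convergence of $\nu_t$ from the geometric decay bound \eqref{normsum}. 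Your explicit remark that $\KKK\neq\{0\}$ is forced under condition (i) is a small point of extra care that the paper leaves implicit when citing Proposition \ref{propn2}.
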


\begin{remark}
Similarly, the sequence \((\nu_t, t \in \mathbb{Z})\) in our representation \eqref{grti2eq} is stationary given that $\varepsilon$ is a strong white noise. Then the representation \eqref{grti2eq} shows that \(X_t\) can be decomposed into a cumulative random walk, a random walk, a stationary process and a term that depends on initial values. 
\end{remark}

\begin{remark}
	From the analytic Fredholm theorem, we know that the random walk component in our I(1) and I(2) representation takes values in a finite dimensional space, which is similar to the existing results  by \cite{BS2018} and \cite{Franchi2017b}; for statistical inference on function-valued time series containing a random walk component, the component is often assumed to be finite dimensional and the representation results presented by \cite{BS2018} and \cite{Franchi2017b} are used to justify this assumption (see e.g., \citealp{seo2020functional} and \citealp{NSS}). 
\end{remark}

\begin{remark}
Propositions \ref{grt} and \ref{grti2} require the autoregressive law of motion to be characterized by a holomorphic operator pencil satisfying Assumption \ref{assu2}. However, we expect a wide class of autoregressive processes considered in practice satisfies the requirement. For example, for \(p \in \mathbb{N}\), let \(\Phi_1, \ldots, \Phi_p\) be compact operators. Then the autoregressive law of motion given by 
\begin{align*}
	X_t = \sum_{j=1}^p \Phi_j X_{t-j} + \varepsilon_t
\end{align*}
satisfies the requirement.
\end{remark}

\begin{remark}
Even though we have assumed that $\varepsilon$ is a strong white noise for simplicity, we may allow more general innovations in Proposition \ref{grt} and \ref{grti2}. For example, we could allow $\|\varepsilon_t\|$ to depend on $t$. Even in this case, if $\|\varepsilon_t\|$ is bounded by $a + |t|^b $ for some $a,b \in \mathbb{R}$, the right hand side of \eqref{normsum} is still bounded by a finite quantity, meaning that \(\nu_t\) converges in \(L^2_H\).    
\end{remark}

\begin{remark}
For simplicity, we have only considered purely stochastic process in Proposition \ref{grt}. However, the inclusion of a deterministic component does not cause significant difficulties. For example, suppose that we have $(X_t, t \geq -p+1)$ generated by the following autoregressive law of motion.
\begin{align*}
	\sum_{j=0}^\infty A_{j,(0)} X_{t-j} = \gamma_t +  \varepsilon_{t}, \quad t \geq 1,
\end{align*}
where \((\gamma_t, t \in \mathbb{Z})\) is a deterministic sequence. In this case, we may need some condition on $(\gamma_t, t \in \mathbb{Z})$ for $\nu_t$ to converges in $L^2_{\mathcal B}$. We could assume that $\|\gamma_t\|$ is bounded by $a + |t|^b $ for some $a,b \in \mathbb{R}$.
\end{remark}

\section{Conclusion}\label{sconclude}
This paper considers inversion of a holomorphic Fredholm pencil based on the analytic Fredholm theorem. We obtain necessary and sufficient conditions for the inverse of a Fredholm operator pencil to have a simple pole and a second order pole, and further derive a closed-form expression of the Laurent expansion of the inverse around an isolated singularity. Using the results, we obtain a suitable version of the Granger-Johansen representation theorem in a general Banach space setting, which fully characterizes I(1) (and I(2)) solutions except a term depending on initial values.

\bibliographystyle{apalike}

\end{document}